\numberwithin{equation}{section}
\theoremstyle{plain}
\newtheorem{Th}{Theorem}[section]
\newtheorem{Lemma}[Th]{Lemma}
\newtheorem{Cor}[Th]{Corollary}
 \theoremstyle{definition}
\newtheorem{Def}[Th]{Definition}
\newtheorem{?}[Th]{Problem}
\begin{document}

\author[Jorge Mello]{Jorge Mello}

\address{Oakland University. mailing adress:\newline Mathematics and Science Center
Michigan.} 

\email{jorgedemellojr@oakland.edu}
\title[ Sparsity for quasiintegral points in orbits and correspondences ]
{ On Sparsity of integral points in orbits and correspondences with big pullbacks under iterates}
\maketitle

\begin{abstract} We prove new unconditional results of sparsity of integral points on orbits under many maps and correspondences in arbitrary dimensions, generalizing theorems of Yasufuku(2015) and others. The main ingredients are new diophantine approximation tools and recent constructions  for correspondences due to Ingram (2011).
\end{abstract}
\section{Introduction} 
Let $\phi:X \rightarrow X$ be a self rational map on a projective variety defined over a number field $k$ and denote by $\phi^{(n)}$ the $n$-iterate of $\phi$. We investigate sparsity in the currently very active field of study about the frequency of integral points lying in the forward orbits $$\mathcal{O}_\phi(P):=\{P,\phi(P),\phi^{(2)}(P),...\}$$ with $P \in X(k)$ in arithmetic dynamics, particularly  in higher dimensions. In dimension $1$, finiteness of algebraic integers in orbits of rational functions was proven in a key breakthrough result of Silverman \cite{S1}, who showed the only exceptions are maps whose second iterate is a polynomial. This was made quantitative and extended to "quasiintegral points"  \cite{HS}, to semigroup dynamics \cite{M2, S1}, and also consistently described over function fields in \cite{CHT,HSW,M1} recently, just to cite a few. As in the classical cases of curves and abelian varieties, the main tools to show sparsity of integral points have been mostly powerful diophantine approximation results like Roth's theorem or Schmidt's subspace theorem, although in higher dimensions the problem is much more complicated. Zariski non-density of integral points in varieties of general type should be true according the Bombieri-Lang conjecture. This is also a consequence of a  deep diophantine approximation conjecture due to Vojta that generalizes Roth and Schmidt, and also implies the ABC conjecture.  Some of the first generalizations of Silverman's theorem to higher dimensions are conditional to Vojta's conjecture, as in the following nice sparsity theorem of Yasufuku on projective spaces
\begin{Th}(Yasufuku)\cite{Y}
Let $\phi: \mathbb{P}^N \rightarrow \mathbb{P}^N$ be a morphism defined over $\mathbb{Q}$ of degree $d\geq 2$ and $D$ be a divisor of $\mathbb{P}^N$. Suppose there exists $m>0$ such that the normal-crossings subdivisor $D_{nc}^{(m)}$ of the pullback $(\phi^{(m)})^*(D)$ satisfies Vojta's conjecture and $\deg D_{nc}^{(m)}>N+1$. Then, for any $P \in \mathbb{P}^N(\mathbb{Q})$, the set $\mathcal{O}_\phi(P) \cap \mathbb{P}^N(\mathbb{Z} \setminus |D|) $ is not Zariski dense.
\end{Th}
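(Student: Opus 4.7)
The plan is to argue by contradiction, using the morphism $\phi^{(m)}$ to pull back the integrality of points in the orbit to an integrality statement with respect to $D_{nc}^{(m)}$, and then to play the integrality estimate against Vojta's conjecture applied to the normal crossings divisor $D_{nc}^{(m)}$ on $\mathbb{P}^N$. Suppose for contradiction that $\Sigma := \mathcal{O}_\phi(P) \cap \mathbb{P}^N(\mathbb{Z} \setminus |D|)$ is Zariski dense. Then there exists a finite set of places $S$ of $\mathbb{Q}$ such that every $Q \in \Sigma$ is $S$-integral with respect to $D$, and $\Sigma$ contains infinitely many iterates $Q_n := \phi^{(n)}(P)$ with $n \geq m$.

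The first main step is to propagate integrality through $\phi^{(m)}$. Since $\phi^{(m)}$ is a morphism, Weil's functoriality of local heights gives
\begin{equation*}
\lambda_{(\phi^{(m)})^* D,\, v}(R) = \lambda_{D,\, v}(\phi^{(m)}(R)) + O_v(1)
\end{equation*}
for each place $v$. Writing $Q_n = \phi^{(m)}(Q_{n-m})$ and using $\lambda_{D,v}(Q_n) = O_v(1)$ for $v \notin S$, I would conclude that $Q_{n-m}$ is $S$-integral with respect to $(\phi^{(m)})^* D$. Since $D_{nc}^{(m)} \leq (\phi^{(m)})^* D$ as effective divisors, the additivity $\lambda_{(\phi^{(m)})^* D, v} = \lambda_{D_{nc}^{(m)}, v} + \lambda_{R, v}$ (with $R$ the residual effective divisor) together with the standard lower bound on local heights of effective divisors forces $Q_{n-m}$ to be $S$-integral with respect to $D_{nc}^{(m)}$ as well. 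Combining with $h_{D_{nc}^{(m)}} = \deg(D_{nc}^{(m)}) \cdot h + O(1)$ on $\mathbb{P}^N$, this yields
\begin{equation*}
m_{D_{nc}^{(m)}}(Q_{n-m},\, S) = \deg(D_{nc}^{(m)}) \cdot h(Q_{n-m}) + O(1).
\end{equation*}

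The second main step is to apply the hypothesis that $D_{nc}^{(m)}$ satisfies Vojta's conjecture. Since $K_{\mathbb{P}^N} = -(N+1)H$ for a hyperplane $H$, the conjecture provides, for every $\epsilon > 0$, a proper Zariski-closed subset $Z = Z(\epsilon) \subsetneq \mathbb{P}^N$ such that
\begin{equation*}
m_{D_{nc}^{(m)}}(R,\, S) \leq (N+1+\epsilon) \, h(R) + O(1) \quad \text{for all } R \in \mathbb{P}^N(\mathbb{Q}) \setminus Z.
\end{equation*}
Choosing $\epsilon$ with $\deg D_{nc}^{(m)} - (N+1) - \epsilon > 0$ and combining with the integrality estimate above forces $h(Q_{n-m})$, and hence $h(Q_n) = d^m h(Q_{n-m}) + O(1)$, to be bounded whenever $Q_{n-m} \notin Z$. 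By Zariski density of $\Sigma$, infinitely many $Q_n \in \Sigma$ satisfy $Q_{n-m} \notin Z$; since the orbit of $P$ is infinite (as $d \geq 2$ and otherwise a periodic orbit is not Zariski dense), Northcott's theorem gives the desired contradiction.

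The main technical obstacle I expect is the bookkeeping in the first step: one must make sure that $S$-integrality with respect to the full set-theoretic pullback $(\phi^{(m)})^* D$ (which need not be reduced or normal crossings) descends to $S$-integrality with respect to the effective subdivisor $D_{nc}^{(m)}$, uniformly over all $v \notin S$ and with constants independent of $n$. This is a routine but delicate application of Weil's theory of local heights for morphisms; once it is in place, the Vojta input and the height comparison $h \circ \phi^{(m)} = d^m h + O(1)$ close the argument essentially mechanically.
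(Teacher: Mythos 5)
The paper states Theorem 1.1 as a cited result from Yasufuku \cite{Y} and does not reprove it, so there is no internal proof to compare against; but your outline is correct and matches both Yasufuku's original argument and the template the present paper uses for its own unconditional generalizations (Theorems 5.1, 5.4, 5.7, 5.9): functoriality of local heights under $\phi^{(m)}$, effectivity of the residual divisor to descend from $(\phi^{(m)})^*D$ to the subdivisor $D_{nc}^{(m)}$, a diophantine-approximation input (Vojta's conjecture here, the He--Ru or Levin subspace theorems in the paper), and a height comparison $h\circ\phi^{(m)}=d^m h+O(1)$ plus Northcott to finish. One step deserves a sentence of justification rather than being asserted: the inference ``by Zariski density of $\Sigma$, infinitely many $Q_n\in\Sigma$ satisfy $Q_{n-m}\notin Z$'' is not immediate from density alone; it holds because $\phi^{(m)}$ is a finite morphism, so $\phi^{(m)}(Z)$ is a proper Zariski-closed subset of $\mathbb{P}^N$, and if all but finitely many $Q_n\in\Sigma$ had $Q_{n-m}\in Z$ then $\Sigma$ would be contained in $\phi^{(m)}(Z)$ together with a finite set, contradicting density. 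Equivalently, and as the paper itself does for its own theorems, you can avoid the contradiction framing and conclude directly that
\[
\Sigma \subseteq \phi^{(m)}(Z)\ \cup\ \{\text{points of bounded height in }\mathcal{O}_\phi(P)\}\ \cup\ \{P,\phi(P),\ldots,\phi^{(m-1)}(P)\},
\]
a proper Zariski-closed subset union a finite set, which is the cleaner way to state what the argument proves.
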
 The theorem  holds more generally over a number field $k$ and replacing integral points with the set of $(D,S)$-integral points 
$$
\left\{P \in \mathbb{P}^N(k)| \sum_{v \notin S}\lambda_v(D,P) \leq ch(D,P) \right\}
$$where $S$ is a finite set of places of $k$, $c$ is a positive constant and $\lambda_v(D,.)$ is a Weil local height function and $h(D,.)$ is a global Weil height that we recall further below.
It also agrees with Silverman's result in one variable. Other recent higher dimensional dynamical analogues of Siegel's theorem for curves like the previous one can be found in recent works \cite{CSTZ, GN, Mat, Y, Y2}, either relying on the conjectures of Vojta, or on new generalizations of the Schmidt's subspace theorem or an effective Runge-type result. Here, among other things we start using a new generalization of Schmidt's subspace theorem due to He and Ru \cite{HR} to obtain an unconditional version of Yasufuku's Theorem for projective varieties as follows
\begin{Th} (See Theorem 5.1)
Let $X$ be a projective variety of dimension $n$, $S$ be a finite set of places and $\phi$ be an endomorphism of $X$.  Suppose there exist $ m\geq 0$, a nontrivial effective divisor $D$ and $A$ an ample divisor of $X$  such that
\begin{itemize}
\item[\emph{(i)}] $(\phi^{(m)})^*(D) = D_{1}  + D_{2} + \cdots +D_{ q} + D'$ is a sum of effective divisors where $D_{1}, \ldots, D_{ q}$ are in $l$-subgeneral position for some $l\geq n$
\item[\emph{(ii)}] There exists $c_j \in \mathbb{Q}$ such that
$ \sum_{j=1}^{q} c_{j}{D_{j}} - (l-n+1)(n+1) A$ is big  and   $A-c_{j}D_{j}$ is $\mathbb{Q}$-nef for all $1 \leq  j \leq q$
\end{itemize}
Then there exists $ \epsilon >0$ such that for all $P\in X(k)$, the set
$$
\big\{\phi^{(n)}(P):n \in \mathbb{N},  \sum_{v\notin S} \lambda_v(D, \phi^{(n)}(P) \le \epsilon h(A,\phi^{(n)}(P))\big\}
$$ is not Zariski dense.
\end{Th}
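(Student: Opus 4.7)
The plan is to combine He--Ru's generalization of Schmidt's subspace theorem with the pullback decomposition provided by hypothesis (i) and standard height functoriality for the morphism $\phi^{(m)}$. Under the bigness and $\mathbb{Q}$-nef conditions of (ii), He--Ru's theorem yields, for every $\delta>0$, a proper Zariski closed subset $Z_0\subsetneq X$ (independent of $P$) such that
\[
\sum_{v\in S}\sum_{j=1}^{q}c_j\,\lambda_v(D_j,R)\le (1+\delta)\,h(A,R)+O(1)\qquad\text{for all }R\in X(k)\setminus Z_0.
\]
The strategy is to convert the quasi-integrality of $Q_t:=\phi^{(t)}(P)$ into a lower bound on the same expression evaluated at the predecessor $R_t:=\phi^{(t-m)}(P)$ (for $t\ge m$), and then force $h(A,R_t)$ to stay bounded.

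Pulling back via $\phi^{(m)}$ and using (i) gives, place by place, $\lambda_v(D,Q_t)=\sum_j\lambda_v(D_j,R_t)+\lambda_v(D',R_t)+O_v(1)$, and globally $h(D,Q_t)=\sum_j h(D_j,R_t)+h(D',R_t)+O(1)$. If $Q_t$ lies in the quasi-integrality set, then combining the defining inequality $\sum_{v\notin S}\lambda_v(D,Q_t)\le\epsilon\,h(A,Q_t)$ with $\lambda_v(D',R_t),\lambda_v(D_j,R_t)\ge -O(1)$ (effectivity) gives $\sum_{v\notin S}\lambda_v(D_j,R_t)\le \epsilon\,h(A,Q_t)+O(1)$ for each $j$. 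Using the global identity $\sum_v\lambda_v(\sum_j c_jD_j,\cdot)=h(\sum_j c_jD_j,\cdot)+O(1)$, this rearranges to
\[
\sum_{v\in S}\sum_{j}c_j\lambda_v(D_j,R_t)\ge h\Bigl(\sum_j c_jD_j,R_t\Bigr)-\Bigl(\sum_j c_j\Bigr)\epsilon\,h(A,Q_t)+O(1).
\]
The bigness of $\sum_j c_jD_j-(l-n+1)(n+1)A$ then supplies a proper Zariski closed $Z_1$ off which $h(\sum_j c_jD_j,R_t)\ge(l-n+1)(n+1)\,h(A,R_t)+O(1)$, while ampleness of $A$ yields a constant $\kappa=\kappa(\phi,m,A)$ with $h(A,Q_t)\le\kappa\,h(A,R_t)+O(1)$ for every $R_t$. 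Pairing these with the He--Ru upper bound, for $R_t\notin Z_0\cup Z_1$,
\[
\bigl[(l-n+1)(n+1)-1-\delta-\kappa\epsilon\textstyle\sum_{j}c_j\bigr]\,h(A,R_t)\le O(1).
\]

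Choose $\delta$ small so that $(l-n+1)(n+1)-1-\delta>0$ and then $\epsilon>0$ small so that the whole bracket remains strictly positive; this $\epsilon$ depends only on the fixed data $(X,\phi,m,D,D_j,D',A,c_j,S,k)$ and not on $P$. The inequality then forces $h(A,R_t)=O(1)$, and Northcott confines such $R_t$ to a finite subset of $X(k)$. Hence every $R_t$ whose image lies in the quasi-integrality set belongs to the proper Zariski closed $W:=Z_0\cup Z_1\cup\{\text{Northcott finite set}\}\subsetneq X$, and consequently $\{Q_t:t\ge m\}\subseteq\phi^{(m)}(W)$, which is Zariski closed of dimension strictly less than $n=\dim X$; the finitely many $t<m$ do not affect Zariski density. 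The main technical obstacle is calibrating all exceptional sets and $O(1)$ constants so that a single $\epsilon$ serves every orbit; this amounts to verifying that the exceptional subvarieties from He--Ru, from the effective witness to bigness, and the height-comparison constant $\kappa$ depend only on the geometric data fixed in the hypotheses, which is done by tracking all implicit dependencies through the height machinery.
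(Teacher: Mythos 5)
Your overall strategy matches the paper's: pull back along $\phi^{(m)}$, compare predecessor and successor heights, feed the local-height inequality into He--Ru, and choose $\epsilon$ uniformly over orbits. However, there is a genuine error in how you invoke He--Ru, and a second compensating error in how you use bigness; the two happen to cancel in your final display, which masks the problem.

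You state the He--Ru bound as
$\sum_{v\in S}\sum_{j}c_j\,\lambda_v(D_j,R)\le (1+\delta)\,h(A,R)+O(1)$,
but the correct bound (Lemma 3.1 in the paper, reflecting the He--Ru subspace theorem) is
$\sum_{v\in S}\sum_{j}c_j\,\lambda_v(D_j,R)\le \bigl[(l-n+1)(n+1)+\delta\bigr]\,h(A,R)+O(1)$.
In parallel, you use bigness of $\sum_j c_jD_j-(l-n+1)(n+1)A$ only to conclude
$h(\sum_j c_jD_j,R)\ge(l-n+1)(n+1)\,h(A,R)+O(1)$,
which is weaker than what bigness gives: there is a strictly positive constant $c'$ (and a proper closed exceptional set) with
$h\bigl(\sum_j c_jD_j-(l-n+1)(n+1)A,\;R\bigr)\ge c'\,h(A,R)+O(1)$,
so in fact $h(\sum_j c_jD_j,R)\ge\bigl[(l-n+1)(n+1)+c'\bigr]h(A,R)+O(1)$. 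Once you put the correct He--Ru coefficient into your chain, the bracket that comes out is
$\bigl[c'-\delta-\kappa\epsilon\sum_j c_j\bigr]\,h(A,R_t)\le O(1)$,
and you must cite $c'>0$ to pick $\delta$ and $\epsilon$ small enough; the bracket $\bigl[(l-n+1)(n+1)-1-\delta-\kappa\epsilon\sum_j c_j\bigr]$ you derived is an artifact of the misquoted coefficient in He--Ru and would instead become $\bigl[-\delta-\kappa\epsilon\sum_j c_j\bigr]\le O(1)$, a vacuous statement, after the quote is corrected. This is precisely the role played in the paper's argument by the constant $c_i'$ arising from bigness, and it is the only source of the strict positivity you need; repair the proof by carrying that margin explicitly.
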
 This theorem recovers and generalizes  Yasufuku's results when his $D_{(nc)}^m$ is a union of lines and his theorem becomes  unconditional due to Schmidt's subspace theorem. Yasufuku's theorem can be made quantitative with respect to bounding the largest number of iterations occurring in an integral iterate (even if we allow the base field to vary with bounded degree) and also uniform not depending on the initial point of the orbit. This was recently achieved by Yasufuku in \cite{Y2} using new diophantine approximation ingredients. His uniformity result replacing points in a given number field with algebraic points of bounded degree provides a version of the univariate results of Gunther and Hindes  \cite{GH}  for $\mathbb{P}^N.$ Similarly, we prove a non-quantitative algebraic points analogue result  for projective varieties assuming bigness of some iterates' pullbacks of a base divisor as suggested in \cite[Remark 8]{Y2}. For dynamics with one map, we state that  below
\begin{Th} (See Thm.'s 5.4 and 5.5)
Let $X$ be a projective variety of dimension $n$, $S$ be a finite set of places, and $\phi$ be an endomorphism of $X$.  Suppose there exist $ m$, a nontrivial effective divisor $D$ and $A$ an ample divisor of $X$  such that
\begin{itemize}
\item[\emph{(i)}] $(\phi^{(m)})^*(D) = D_{1}  + D_{2} + \cdots +D_{ q} + D'$ a sum of effective ample divisors, $D_{1}, \ldots, D_{q}$ are in $l$-subgeneral position with $l\geq n$, 
\item[\emph{(ii)}] There exist integers $d_i>0$ such that
$ \sum_{j=1}^{q} \frac{D_{j}}{d_{j}} - C(l,n,\delta) A$ is big for all $ i$ and $D_{j}\equiv d_{j}A $  for all $j$ where $C(l,n,\delta)$ is a positive constant given in \cite{Le}.
\end{itemize}
Then there exists $ \epsilon>0$ such that the set
\begin{equation*}
\big\{\phi^{(n)}(P):  n \in \mathbb{N}^{\geq m}, [k(P):k]\leq \delta, \sum_{v\notin S}\sum_{\substack{w\mid v\\ w \in M_{k(P)}}} \lambda_w(D, \phi^{(n)}(P) \le \epsilon h(A,\phi^{(n)}(P)\big\}
\end{equation*}is not Zariski dense. Moreover, if $X=\mathbb{P}^2$, the $D_j$'s are lines in general position and $A \sim O_{\mathbb{P}^2}(1)$, then condition (ii) can be reduced to $q>15/2$. 
\end{Th}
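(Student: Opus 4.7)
The plan is to adapt the proof of Theorem 5.1, replacing the He--Ru subspace theorem with Levin's extension of the Schmidt subspace theorem to algebraic points of bounded degree \cite{Le}, whose effective constant is exactly the $C(l,n,\delta)$ appearing in hypothesis (ii).

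First I would reduce the integrality condition on $\phi^{(r)}(P)$ (with $r\ge m$) to one on $Q:=\phi^{(r-m)}(P)$. Functoriality of local Weil heights gives
$$\sum_{w\mid v}\lambda_w(D,\phi^{(r)}(P))=\sum_{j=1}^{q}\sum_{w\mid v}\lambda_w(D_j,Q)+\sum_{w\mid v}\lambda_w(D',Q)+O(1),$$
and the effectivity of $D'$, combined with a global comparison $h(A,\phi^{(m)}(Q))\le c\,h(A,Q)+O(1)$ (with $c$ depending on $m$) arising from the pullback $(\phi^{(m)})^*A$, converts the hypothesis into
$$\sum_{j=1}^{q}\frac{1}{d_j}\sum_{v\notin S}\sum_{w\mid v}\lambda_w(D_j,Q)\;\le\;\epsilon'\,h(A,Q)+O(1)$$
for a controlled $\epsilon'$ proportional to $\epsilon$.

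Next I would invoke Levin's bounded-degree subspace theorem applied to $D_1,\dots,D_q$ in $l$-subgeneral position with $D_j\equiv d_jA$: for any $\eta>0$, algebraic points $Q$ with $[k(Q):k]\le \delta$ satisfying the complementary inequality
$$\sum_{j=1}^{q}\frac{1}{d_j}\sum_{v\in S}\sum_{w\mid v}\lambda_w(D_j,Q)\;\ge\;\bigl(C(l,n,\delta)+\eta\bigr)\,h(A,Q)+O(1)$$
lie in a proper Zariski closed subset $Z\subsetneq X$. Combined with the global identity $\sum_v\lambda_v(D_j,Q)=d_jh(A,Q)+O(1)$ and the strict bigness of $\sum_j D_j/d_j-C(l,n,\delta)A$ from (ii), choosing $\epsilon$ small enough forces every admissible $Q$ into $Z$. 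Hence every admissible iterate $\phi^{(r)}(P)=\phi^{(m)}(Q)$ lies in $\phi^{(m)}(Z)$, a proper closed subset of $X$ (using that $\phi^{(m)}$ is finite on $X$), and the target set is not Zariski dense. For the $\mathbb{P}^2$ specialization, setting $n=l=2$, $A=\mathcal{O}_{\mathbb{P}^2}(1)$, and $d_j=1$ collapses hypothesis (ii) to $(q-C(2,2,\delta))\,A$ being big, i.e.\ $q>C(2,2,\delta)$; substituting Levin's explicit formula then yields the stated $q>15/2$.

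The main technical obstacle will be the bookkeeping in the reduction step: absorbing the effective remainder $D'$, precisely relating $h(A,\phi^{(m)}(Q))$ to $h(A,Q)$ uniformly in the iterate, and matching the converted inequality to the exact shape of Levin's hypothesis. Ensuring that the exceptional locus $Z$ is strictly contained in $X$ relies on the strict bigness in (ii) together with the freedom to shrink $\eta$; once this is arranged, the pushforward by $\phi^{(m)}$ and the Zariski non-density conclusion follow routinely.
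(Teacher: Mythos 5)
Your main argument follows essentially the same route as the paper's Theorem 5.4: reduce the iterate to $Q=\phi^{(r-m)}(P)$ via functoriality, drop the effective remainder $D'$, compare $h(A,\phi^{(m)}(Q))$ with $h(A,Q)$ through the pullback of the ample divisor, and then invoke the bounded-degree subspace theorem of Le (Lemma~3.2 in the paper) to trap $Q$ in a proper Zariski-closed set, whose image under the finite map $\phi^{(m)}$ is again proper. That part is correct and matches the paper.

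There are two points to flag. First, and substantively: your treatment of the $\mathbb{P}^2$ refinement is wrong as written. You derive $q>C(2,2,\delta)$ and then say that ``substituting Levin's explicit formula yields $q>15/2$.'' But $C(l,n,\delta)$ is the constant from Le's theorem \cite{Le}, and it is \emph{not} equal to $15/2$ when $l=n=2$; the bound $15/2$ does not come out of specializing Le's formula. It is a separate, sharper result of Levin for lines in general position in $\mathbb{P}^2$ and points of degree $\le 2$ (\cite[Theorem~1.5]{L}, the paper's Lemma~3.3). The paper's Theorem~5.5 handles the $\mathbb{P}^2$ case by replacing Lemma~3.2 with Lemma~3.3 throughout the same argument, not by evaluating $C(2,2,\delta)$. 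You need to invoke that separate theorem; otherwise the stated $15/2$ is unjustified.

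Second, a minor but real error in a side remark: the ``global identity'' $\sum_v\lambda_v(D_j,Q)=d_j h(A,Q)+O(1)$ is false when $D_j\equiv d_jA$ is only a \emph{numerical} equivalence --- the error is $o(h(A,Q))$, not $O(1)$. Fortunately you do not need it: the bigness of $\sum_j D_j/d_j-C(l,n,\delta)A$ already gives $h\bigl(\sum_j D_j/d_j,Q\bigr)\ge (C(l,n,\delta)+c')h(A,Q)+O(1)$ off a proper closed set, which feeds directly into the complementary inequality required by Lemma~3.2. Dropping the false identity and using bigness alone, as the paper does, closes that gap.
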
 

Both theorems 1.2 and 1.3 are first proven and stated in the more general context of semigroup dynamics generated by finitely many maps(non-commuting in general).

Lastly, we obtain the first versions of these results in the context of correspondences, which generalizes the scenario of deterministic dynamical systems. By $C$ be a correspondence on $X$ we mean a subvariety
$C \subset X \times X$ such that the coordinate projections are both finite and surjective.
We  think of the points $X(k)$ as a set of vertices, with the points $C(k)$
defining directed edges between those vertices, which induces a graph whose union of paths generalizes the union of orbits in a classical dynamical system. In this context, we prove analogous sparsity results for similar generalized integrality sets using constructions on correspondences that were made by Ingram \cite{I} when there are divisors with large Correspondence Iterate Pullbacks (CIP). This is a concept that naturally generalizes pullbacks in dynamical systems and  that we define in Section 4. In particular, for projective spaces, we obtain
\begin{Th} (See Theorem 5.9)
Let  $C \subset \mathbb{P}^N \times  \mathbb{P}^N$ be a subvariety (correspondence) of $ \mathbb{P}^N$ defined over $k$, $S$ finite set of places of $k$. Let $\mathcal P$ be the scheme of paths associated to $C$, with associated maps $\pi : \mathcal{P} \rightarrow  \mathbb{P}^N$ and  $\sigma: \mathcal{P} \rightarrow \mathcal{P}$ as it is described in Section \ref{corresp}.  
Suppose that there exist $m \in \mathbb{N}^{>0}$ and a nontrivial effective divisor $D$ defined over $k$ that has an $m$-th iterate (CIP)  $D^{(m)}$ with respect to $C$ such that
\begin{itemize}
\item[\emph{(i)}] $D^{(m)} = D_1+...+D_q + D^{(\text{rest})}$ is a sum of effective ample divisors  where $D_1,...,D_q$ are in $l$-general position for some $l\geq n$.
\item[\emph{(ii)}]  
$ \sum_{j=1}^{q} {\deg D_{j}} > C(l,n,\delta) $

\end{itemize}
Then for all sufficiently small $\epsilon$, the set
$$
\{\pi(\sigma^{(n)}(P)): P\in \mathcal{P}, [k(\pi(\sigma^{(n)}(P))):k]\leq\delta, n \in \mathbb{N}^{\geq m}, \sum_{\substack{v\notin S \\w\mid v\\ w \in M_{k(P)}}}  \lambda_w(D, \pi(\sigma^{(n)}(P))) \le \epsilon h(\pi(\sigma^n(P)))\}
$$
is not Zariski dense.

\end{Th}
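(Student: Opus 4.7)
The plan is to reduce Theorem~5.9 on the correspondence $C$ to the bounded-degree sparsity result Theorem~5.5 applied to the deterministic dynamical system $(\mathcal{P}, \sigma)$ constructed from $C$ in Section~\ref{corresp} following Ingram \cite{I}. Paths through the correspondence graph on $\mathbb{P}^N$ are exactly $\sigma$-orbits in $\mathcal{P}$ composed with the finite surjective map $\pi$, so Zariski non-density of a $\sigma$-invariant collection of iterates on $\mathcal{P}$ will descend to Zariski non-density of its $\pi$-image on $\mathbb{P}^N$.

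First I would transport the hypotheses on $D^{(m)}$ from $\mathbb{P}^N$ up to $\mathcal{P}$. By the defining property of the $m$-th iterate CIP given in Section~\ref{corresp}, the global and local Weil heights of $D^{(m)}$ at a point $\pi(Q) \in \mathbb{P}^N$ agree, up to bounded error, with those of $\pi^{*}D$ evaluated at $\sigma^{(m)}(Q) \in \mathcal{P}$. Pulling the decomposition $D^{(m)} = D_1+\cdots+D_q+D^{(\mathrm{rest})}$ back through $\pi$ produces divisors $\widetilde{D}_j = \pi^{*}D_j$ on $\mathcal{P}$ that are ample (since $\pi$ is finite and $D_j$ is ample) and in $l$-general position (preserved under pullback by finite surjective morphisms). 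Setting $\widetilde{A} = \pi^{*}O_{\mathbb{P}^N}(1)$ and $d_j = \deg D_j$ gives $\widetilde{D}_j \equiv d_j \widetilde{A}$, and the hypothesis $\sum_{j=1}^{q}\deg D_j > C(l,n,\delta)$ translates to
\[
\sum_{j=1}^{q} \frac{\widetilde{D}_j}{d_j} - C(l,n,\delta)\,\widetilde{A} \;\equiv\; \bigl(q - C(l,n,\delta)\bigr)\widetilde{A},
\]
which is ample and hence big. This is precisely condition~(ii) of Theorem~5.5 for the system $(\mathcal{P}, \sigma)$ with ample class $\widetilde{A}$ and divisor $\pi^{*}D$, while condition~(i) is the inherited decomposition.

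Second I would apply Theorem~5.5 to $(\mathcal{P}, \sigma)$ to extract $\epsilon_0 > 0$ and a proper Zariski-closed $Z \subsetneq \mathcal{P}$ containing every iterate $\sigma^{(n)}(P)$ (with $n \ge m$) of bounded algebraic degree whose local-height defect against $\pi^{*}D$ is at most $\epsilon \, h(\widetilde{A}, \sigma^{(n)}(P))$, for every $\epsilon \in (0, \epsilon_0)$. Here the bounded-degree condition on $\pi(\sigma^{(n)}(P))$ controls $[k(\sigma^{(n)}(P)):k]$ up to a factor of $\deg \pi$ that is absorbed into $\delta$, and the CIP height compatibility of the first step converts the integrality defect stated in Theorem~5.9 (measured at $\pi(\sigma^{(n)}(P))$ against $D$) into the integrality defect measured at $\sigma^{(n)}(P)$ against $\pi^{*}D$ after harmless shrinking of $\epsilon$. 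Finally I push down by the finite surjective $\pi$: the image $\pi(Z) \subsetneq \mathbb{P}^N$ is Zariski closed and proper, and contains the Zariski closure of the set appearing in the statement, which is the desired conclusion.

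The main obstacle is securing the precise height-theoretic compatibility between the CIP $D^{(m)}$ on the base $\mathbb{P}^N$ and the genuine iterated pullback $(\sigma^{(m)})^{*}\pi^{*}D$ on $\mathcal{P}$. Because $\mathcal{P}$ need not be smooth and the projections from $C$ need not be flat, naive functoriality of divisor pullback fails and must be replaced by the CIP formalism developed in Section~\ref{corresp}; getting both the global and local height identities (with controlled error) and verifying that ampleness, $l$-general position, and numerical equivalence $D_j \equiv d_j A$ all survive pullback to a possibly singular $\mathcal{P}$ is the technical heart. Once these identities are in hand, the translation of the single hypothesis $\sum_j \deg D_j > C(l,n,\delta)$ into the bigness input of Theorem~5.5 is immediate and the remainder of the argument is a formal reduction through $\pi$.
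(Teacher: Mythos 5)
Your reduction goes in the opposite direction from the paper's, and the direction you chose does not work. The paper never lifts the sparsity problem to the path scheme $\mathcal{P}$; it proves Theorem~5.9 entirely on the base variety $X$, using the CIP chain $y^*L_{i-1}=x^*L_i$ only to transport local and global heights, so that
$$
\lambda_w(D,\pi(\sigma^{(n)}(Q)))\ \sim\ \lambda_w(D^{(m)},\pi(\sigma^{(n-m)}(Q)))
$$
for every relevant place $w$, and likewise for $h$. Lemma~3.2 (Levin--Le and Ru--Wang for points of bounded degree) is then applied to the divisors $D_1,\ldots,D_q$ on $\mathbb{P}^N$ itself, yielding a proper closed $Z\subsetneq\mathbb{P}^N$ together with a bounded-height exceptional set containing $\pi(\sigma^{(n-m)}(P))$; the conclusion is obtained by pushing this $m$ steps forward along the correspondence $C$, which preserves proper closed subsets because the two coordinate projections $x,y\colon C\to X$ are finite and surjective. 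Theorem~1.4 is then the specialization $X=\mathbb{P}^N$, $A=O_{\mathbb{P}^N}(1)$, $d_j=\deg D_j$, so that $D_j\equiv d_j A$ and $\sum_j D_j/d_j - C(l,n,\delta)A \equiv (q'-C(l,n,\delta))A$ is big, where $q'=\sum_j\deg D_j$.

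Your plan instead treats $(\mathcal{P},\sigma)$ as a deterministic dynamical system satisfying the hypotheses of the orbit theorems of Subsection~5.2, with $\widetilde{A}=\pi^*O_{\mathbb{P}^N}(1)$ and $\widetilde{D}_j=\pi^*D_j$. This is where the proof breaks. First, $\mathcal{P}$ is Ingram's path scheme: it is the inverse limit of the finite fibre products $C\times_X C\times_X\cdots$, is not a projective variety (and not of finite type), so Theorems~5.4/5.5, which are stated for a projective variety of dimension $n$, do not apply to it. Second, and more fundamentally, $\pi\colon\mathcal{P}\to X$ is \emph{not} a finite morphism. Lemma~4.1 only asserts that $\pi$ is surjective (it is $\sigma$, not $\pi$, that is finite); the fibre $\pi^{-1}(P)$ parametrizes all infinite forward paths starting at $P$, which is infinite whenever the correspondence is not the graph of a morphism. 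Consequently $\pi^*D_j$ is not ample on $\mathcal{P}$ (it is trivial along the infinite fibres of $\pi$), the $l$-subgeneral position hypothesis does not descend (the intersections $\pi^{-1}(\bigcap_{j\in J}D_j)$ acquire the fibre dimension of $\pi$), the numerical-equivalence condition $\widetilde{D}_j\equiv d_j\widetilde{A}$ has no meaning on a non-projective scheme, and the final step $\pi(Z)$ being closed and proper fails because $\pi$ is neither finite nor closed. You correctly observe that the CIP relation does give $(\sigma^{(m)})^*\pi^*D=\pi^*D^{(m)}$, but that identity is precisely why the paper can stay on $X$ -- it makes the lift to $\mathcal{P}$ unnecessary -- and none of the geometric hypotheses of the diophantine input survives the lift. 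To repair the argument you would have to abandon the reduction to $(\mathcal{P},\sigma)$ and instead apply Lemma~3.2 directly on $\mathbb{P}^N$ after transferring heights along the CIP chain, which is exactly the route of the paper's Theorem~5.9.
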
 Again, when $N=2$ and the $D_j$'s are lines in general position, we can replace  $C(l,n,\delta)$ with the sharp 15/2 due to Levin\cite{L}, and the Zariski closed set containing the integrality set is a union of lines plus a finite set.

As another byproduct, we obtain the following novel univariate finiteness statement for integers in correspondences which in the particular setting of dynamical systems ( when $C$ is the graph of a rational function)  agrees with the theorems of Silverman and Gunther-Hindes, holding for correspondences having an  "iterate" with at least 3 poles,
\begin{Th} (See Corollary 5.10) Let  $C \subset \mathbb{P}^1 \times  \mathbb{P}^1$ be a correspondence of $ \mathbb{P}^1$ defined over a number field $k$. Let $\mathcal P$, $\pi : \mathcal{P} \rightarrow  \mathbb{P}^1$ and  $\sigma: \mathcal{P} \rightarrow \mathcal{P}$ as in Thm. 1.4.
Suppose that the divisor at infinity 
$(\infty)$ has an $m$-th iterate (CIP)   $D^{(m)}$ that is effective and whose support has more than $  C(1,1,\delta)$  points.  
 
Then 
$$
\{\pi(\sigma^{(n)}(P)): P\in \mathcal{P}, [k(\pi(\sigma^{(n)}(P))):k]\leq\delta, n \in \mathbb{N}^{\geq m}, \pi(\sigma^{(n)}(P)) \text{ is an algebraic integer} \}
$$
is finite.

 In particular, the subset of set above formed by the points defined over $k$ $ (\delta=1)$ is finite when when we replace $C(1,1,\delta)$ with $2.$
\end{Th}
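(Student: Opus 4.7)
The plan is to obtain Corollary 5.10 as a direct specialization of Theorem 5.9 (Theorem 1.4 above) to $N=n=1$. In dimension one the geometric hypotheses of Theorem 5.9 trivialize, and in the special case $\delta=1$ one can replace the general constant $C(1,1,1)$ by the sharp value $2$ coming from the one-variable Subspace Theorem (i.e.\ Roth's theorem, in the form used by Levin).

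I would first decompose the given pullback as $D^{(m)} = P_1+\cdots+P_q + D^{(\mathrm{rest})}$, where $P_1,\ldots,P_q$ are the distinct points of $\mathrm{supp}(D^{(m)})$ and $D^{(\mathrm{rest})}$ absorbs the remaining multiplicities. Each $P_j$ is an effective ample divisor on $\mathbb{P}^1$ of degree one, and any finite collection of distinct points on a curve lies in $l$-general position for every $l\geq 1=n$, so hypothesis (i) of Theorem 5.9 is automatic. Since $\deg P_j=1$ for every $j$, the condition $\sum_{j=1}^{q}\deg P_j > C(1,1,\delta)$ is exactly the hypothesis $\#\,\mathrm{supp}(D^{(m)}) > C(1,1,\delta)$, so hypothesis (ii) also holds.

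Applying Theorem 5.9 with $D=(\infty)$ then furnishes an $\epsilon>0$ for which the set of $\pi(\sigma^{(n)}(P))$ with $n\geq m$, $[k(\pi(\sigma^{(n)}(P))):k]\leq\delta$ and
\[
\sum_{v\notin S}\sum_{\substack{w\mid v\\ w\in M_{k(P)}}}\lambda_w\bigl((\infty),\pi(\sigma^{(n)}(P))\bigr) \;\leq\; \epsilon\,h\bigl(\pi(\sigma^{(n)}(P))\bigr)
\]
is not Zariski dense in $\mathbb{P}^1$, hence is finite. A point $Q\in\mathbb{P}^1(\overline{k})$ is an algebraic integer exactly when $\lambda_w((\infty),Q)=0$ at every finite place $w$, so any such $Q$ trivially satisfies the above quasi-integrality inequality and must therefore lie in this finite exceptional set, which gives the claimed finiteness.

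For the $\delta=1$ assertion I would replace $C(1,1,1)$ by the sharp constant $2$ obtained from the one-dimensional Subspace/Roth theorem (equivalently, the sharp case of Levin's result in this setting), so that $\#\,\mathrm{supp}(D^{(m)})\geq 3$ already suffices; this is the direct analogue of the ``three-poles'' hypothesis in Silverman~\cite{S1} and Gunther--Hindes~\cite{GH}. The only step that requires genuine care, and which I view as the main technical hurdle, is verifying that the CIP construction of Section~\ref{corresp} interacts correctly with the local Weil heights of $(\infty)$ along the maps $\pi$ and $\sigma$, so that the classical integrality of $\pi(\sigma^{(n)}(P))$ really transfers to the quasi-integrality data controlled by Theorem 5.9; but this compatibility is precisely what the CIP formalism of Ingram~\cite{I} is engineered to deliver, and once it is in hand the reduction above is mechanical.
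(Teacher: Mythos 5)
Your proposal is correct and follows essentially the same route as the paper: the paper derives Corollary 5.10 (equivalently Theorem 1.5) by applying Theorem 5.9 with $X=\mathbb{P}^1$, where, as you observe, the decomposition $D^{(m)}=P_1+\cdots+P_q+D^{(\mathrm{rest})}$ into distinct support points makes the subgeneral-position and numerical-equivalence hypotheses automatic (all $d_j=1$, $A\sim O_{\mathbb{P}^1}(1)$), reducing condition (ii) to $q>C(1,1,\delta)$, with $q>2$ in the $\delta=1$ case. The final step --- observing that an algebraic integer $Q$ has $\lambda_w((\infty),Q)=0$ at all non-archimedean $w$, so the quasi-integrality sum vanishes and $Q$ lands in the finite exceptional set on $\mathbb{P}^1$ --- is the same implicit reduction the paper uses; note only that this last step does not actually rely on any CIP compatibility (that work is already internal to the proof of Theorem 5.9), so the ``main technical hurdle'' you flag is in fact already dispatched.
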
 The theorem above is also true  for $S$-integers with $S$  a finite set of places.

In Section 2 we discuss some preliminaries on heights and Section 4 contains the needed background on correspondences. The diophantine approximation tools are stated in Section 4, and the main results are proven in Sections 4,5 and 6, for nef divisors and pullbacks using He and Ru's theorem, algebraic points and correspondences respectively.

\section{Preliminaries on heights }\label{sec:heights}

Let $k$ be a number field, and let $M_k$ be the set of places.  For the unique archimedean place of $M_\mathbb{Q}$, we use the usual absolute value on $\mathbb{R}$ as the normalized one.  For the non-archimedean place of $M_\mathbb{Q}$ corresponding to the prime $p$, we normalize the absolute value by defining the absolute value of $p$ to be $\dfrac{1}{p}$.  We then normalize absolute value corresponding to each place $v\in M_k$ by defining $|x|_v$ to be the $\frac{[k_v:\mathbb{Q}_v]}{[k:\mathbb{Q}]}$-th power of the absolute value in $v$ whose restriction to $\mathbb{Q}$ is a normalized absolute value on $\mathbb{Q}$.  We then define a Weil (global) height on the projective space $\mathbb{P}^N$ by
\[
h([a_0:\cdots : a_N] ) = \sum_{v\in M_k} \log \max_i |a_i|_v
\]
for $[a_0:\cdots : a_N]\in \mathbb{P}^N(k)$.  This becomes a well-defined function on $\mathbb{P}^N(\overline{\mathbb{Q}})$.  For a nontrivial effective divisor $D$ on $\mathbb{P}^N$ defined over $k$, we choose a homogeneous polynomial $F$ of degree $d$ with coefficients in the ring $\mathcal O_k$ of integers of $k$ so that $D$ is defined by $F$, and we define a local height function for each $v\in M_k$ by
\[
\lambda_v(D, [a_0:\cdots : a_N]) = \log \frac{(\max |a_i|_v)^d}{|F(a_0, \ldots, a_N)|_v}.
\]
This is a well-defined function on $(X\setminus |D|)(k)$.  For a general divisor $D$ on a projective variety $X$, we first write $D$ as a difference of two very ample Cartier divisors $D_1$ and $D_2$, where $D_i = \varphi^*(H)$ for some closed immersion $\varphi_i: X \longrightarrow \mathbb{P}^{N_i}$ and $H_i$ is a hyperplane in $\mathbb{P}^{N_i}$.  We then define Weil height to be
\[
h(D, P) = h(\varphi_1(P)) - h(\varphi_2(P)).
\]
Moreover, letting $s_{i,1},\ldots, s_{i, \ell_i}$ be a basis of global sections of the line bundle $\mathscr L(D_i)$ and choosing one rational section $s$ of $\mathscr L(D)$, we can define local height to be
\[
\lambda_v(D,P) = \max_m \min_\ell \log \left| \big(s_{1,m} \otimes (s_{2,\ell} \otimes s)^{-1}\big)(P) \right|_v,
 \]
where we evaluate at $P$ via the isomorphism $\mathscr L(D_1)\otimes (\mathscr L(D_2) \otimes \mathscr L(D))^{-1} \cong \mathscr O_X$.  This notion is well-defined in the sense that if we choose different parameters for the ample divisors or the global/rational sections, the two local height functions agree on all but finitely many places and even at those finitely many places, their difference is a bounded function.  The local and Weil height are functorial with respect to pullbacks, namely,
\[
h(\varphi^* D, P) = h(D, \varphi(P))+O(1)  \qquad \text{ and } \qquad   \lambda_v(\varphi^*D, P) = \lambda_v(D, \varphi(P)) +O(1)
\]
whenever $\varphi$ is a morphism of algebraic varieties, where equality is interpreted to mean agreement up to a bounded function (or up to bounded functions in just finitely many places).  

When $D$ and $E$ are divisors of $X$, the following additive properties also hold for every $P \in X(k)$ where the functions are defined
\[
h(D+E, P) = h(D,P)+h(E,P)+O(1)  \quad \text{ and } \quad   \lambda_v(D+E, P) = \lambda_v(D, P)+\lambda_v(E, P) +O(1)
\]
Also, if $D$ is effective, then 
\[
h(D, P) \geq O(1)  \quad \text{ and } \quad   \lambda_v(D, P) \geq O(1)
\]for all $P\in (X\setminus |D|)(k)$.

Moreover, it follows from the definitions that
\begin{equation}\label{eq:htdecompose}
h(D,P) = \sum_{v\in M_k} \lambda_v(D,P) +O(1)
\end{equation}
for all $P\in (X\setminus |D|)(k)$

\section{Tools from diophantine approximation}
Here we state the diophantine approximation results we will use. First, the following recent by He and Ru.  For the definitions of general position and subgeneral position, see \cite{HR,L}
\begin{Lemma} Let $X$ be a projective variety of dimension $n$ defined over a number field  $k$ . Let $S$ be a finite set of places of $k$. Let $D_{1} , . . . , D_{q}$ be effective Cartier divisors on $X$, defined over $k$, in $l$-subgeneral position for $l \geq n$. Let $A$ be an ample Cartier divisor on $X$ and let $c_{i}$ be rational numbers such that $A -c_{i} D_{i}$ is a nef $\mathbb{Q}$-divisor for all $i$. Let $\epsilon > 0$. Then there exists a proper Zariski-closed subset $Z \subset X $such that for all points $P \in X(k) \setminus Z$,
$$
\sum_{v \in S} \sum_{i=1}^q c_{i}\lambda_v(D_{i},P)<[(l-n+1)(n+1)+\epsilon]h(A,P).
$$
\end{Lemma}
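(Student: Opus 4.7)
The plan is to follow the He-Ru strategy, which weaves together three classical ingredients: a filtration argument of Corvaja-Zannier-Autissier type applied to sections of a large multiple of $A$, the Schmidt subspace theorem in the Ru-Vojta form for projective varieties, and a Nochka-type reweighting that passes from $l$-subgeneral to general position. The shape of the constant $(l-n+1)(n+1)$ splits accordingly: the factor $l-n+1$ arises from the Nochka reduction, while $n+1$ is the natural dimension constant for Schmidt bounds in general position; the $\epsilon$ absorbs asymptotic Hilbert-Samuel errors and Schmidt's own $\epsilon$.

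Concretely, I would first fix a large integer $N$ such that $NA$ is very ample, giving an embedding $\varphi:X\hookrightarrow \mathbb{P}^{M-1}$ with $M=h^0(X,\mathcal{L}(NA))$. For each $i$, introduce the decreasing filtration $\mathcal{F}_i(t)=H^0(X,\mathcal{L}(NA-\lfloor t c_iD_i\rfloor))$ and, using the nef hypothesis on $A-c_iD_i$, bound $\sum_t \dim\mathcal{F}_i(t)/\mathcal{F}_i(t+1)$ via asymptotic Riemann-Roch/volume estimates (equivalently through Newton-Okounkov body data of $NA$). For each $v\in S$ pick a basis of $H^0(X,\mathcal{L}(NA))$ adapted to $\mathcal{F}_i$, so that the filtration indices recover $Nc_i\lambda_v(D_i,P)$ up to $O(1)$ as a sum of logarithms of $v$-norms of coordinates of $\varphi(P)$. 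Because at each pair $(v,P)$ the $l$-subgeneral position hypothesis forces the divisors contributing large local heights to be generic, Nochka's lemma (or Quang/Levin's higher-dimensional analogue) produces a reweighting that converts the $l$-subgeneral sum into one weighted as in general position, with loss factor $l-n+1$. Concatenating the adapted bases over $v\in S$ and invoking the Ru-Vojta form of Schmidt's subspace theorem for $\varphi(P)$ produces a proper Zariski-closed exceptional set $Z$ outside which the resulting log-norm sum is at most $(n+1+\epsilon')Nh(A,P)$; dividing by $N$ and translating back through the filtration identification yields the claimed inequality.

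The main technical obstacle is extracting the clean constant $n+1$ from the filtration dimensions under the mere nef hypothesis on $A-c_iD_i$, without the stronger proportionality $D_i\equiv d_iA$ assumed in earlier incarnations of the method: this requires the Okounkov-body / asymptotic Riemann-Roch machinery to bound $\dim\mathcal{F}_i(t)$ uniformly in $i$ with an error of size $\epsilon N^n$, and it is exactly the point at which the He-Ru refinement improves the older Ru-Vojta-Autissier bounds. The secondary difficulties are synchronizing the adapted basis choice across the finitely many places $v\in S$, and assembling the various Schmidt exceptional loci, together with the subvariety on which the subgeneral-position condition degenerates, into a single proper Zariski-closed $Z\subset X$.
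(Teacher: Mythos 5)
The paper's own proof of this lemma is essentially a citation: it notes that the statement is the ``Corollary of the Main Theorem'' of He--Ru \cite{HR}, applied with $Y_j = D_j$, together with the elementary observation (recorded in Heier--Levin) that for an effective Cartier divisor $D$ the Seshadri constant $\epsilon_D(A)$ is exactly $\sup\{\gamma \in \mathbb{Q}_{\ge 0} : A - \gamma D \text{ is } \mathbb{Q}\text{-nef}\}$, so that the hypothesis ``$A - c_iD_i$ nef'' in the lemma translates directly into the Seshadri-constant hypothesis in He--Ru's corollary. You instead attempted to re-derive the He--Ru theorem itself from scratch, which is a genuinely different and far more ambitious route, and it is where the gaps lie.

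Two of the steps you invoke would not go through as sketched. First, your filtration $\mathcal{F}_i(t) = H^0(X,\mathscr{L}(NA - \lfloor t c_iD_i\rfloor))$ is a \emph{per-divisor} filtration, but the Corvaja--Zannier--Autissier / Ru--Vojta machine does not work one divisor at a time; it needs a joint filtration indexed by tuples (or a supremum over bases adapted simultaneously to competing subsets of the $D_i$ at each place), and the combinatorics of how these bases overlap at different $v\in S$ is where all the work happens. Describing the adapted basis as recovering ``$Nc_i\lambda_v(D_i,P)$ up to $O(1)$'' for each $i$ separately and then summing is precisely the step that is false without a genuine Nochka-type combinatorial control. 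Second, and more importantly, you assert that ``Nochka's lemma (or Quang/Levin's higher-dimensional analogue) produces a reweighting'' giving the loss factor $l-n+1$. Classical Nochka weights are a statement about \emph{hyperplanes} in subgeneral position in $\mathbb{P}^n$; a Nochka-weight analogue for arbitrary effective Cartier divisors on a projective variety under only a nefness assumption on $A - c_iD_i$ is not an available black box, and He--Ru do not proceed this way. Their factor $(l-n+1)(n+1)$ comes out of a direct counting argument about how many of the $Y_j$ can pass through a point (and a partition of $S$ according to which $Y_j$ dominate the local height there), not from a Nochka reweighting. As written, your sketch identifies the right circle of ideas but the two steps that would actually produce the constant $(l-n+1)(n+1)$ from the $l$-subgeneral position hypothesis are asserted rather than carried out, and the Nochka route in particular would fail for general divisors. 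The short path the paper takes --- recognize the statement as the He--Ru corollary and only check the Seshadri-constant reformulation --- is the one you should have found.
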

\begin{proof}
This is a direct consequence of \cite[Corollary of the Main Theorem(pg. 140)]{HR} applied with $Y_1=D_1,...,Y_q=D_q$ and of the fact that if $D$ is an effective Cartier divisor, then the Seshadri constants therein simplify to 
$$
\epsilon_D(A)=\sup \{\gamma \in \mathbb{Q}_{\geq 0} | A-\gamma D \text{ is } \mathbb{Q}\text{-nef} \}
$$(see the discussion in \cite[Page 126]{HL})
\end{proof} The next one is a new Schmidt subspace type of theorem for algebraic points. The symbol $\equiv$ denotes numerical equivalence of divisors.
\begin{Lemma} Let $X$ be a projective variety of dimension $n$ defined over a number field  $k$ . Let $S$ be a finite set of places of $k$ and $D_{1} , . . . , D_{q}$ be effective ample Cartier divisors on $X$, defined over $k$, in $m$-subgeneral position with $m \geq n$. Let $A$ be an ample Cartier divisor on $X$ and let $d_{i}$ be positive integers such that $d_i A \equiv  D_{i}$ for all $i$ and for all $v \in S$. Let $\epsilon > 0$ and $\delta \in \mathbb{N}^{\geq 1}$. Then 
$$
\sum_{v \in S}\sum_{\substack{w\mid v\\ w \in M_{k(P)}}} \sum_{i=1}^q \dfrac{\lambda_w(D_{i},P)}{d_i}<[C(m,n,\delta)+\epsilon]h(A,P)+O(1)
$$ holds  for all points $P \in X(\overline{k}) \setminus \bigcup_i \text{Supp } D_i$ satisfying $[k(P):k]\leq \delta$, where $C(m,n,\delta)$ is a positive constant given in \cite[Page 229]{Le}.
\begin{proof}
This is \cite[Theorem 2]{Le}, which improves the bound on \cite[Theorem 5.2]{L} together with \cite[Lemma 3.4]{RW}.
\end{proof}
\end{Lemma}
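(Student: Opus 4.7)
The plan is to combine two ingredients already present in the literature: a Schmidt subspace-type theorem for algebraic points of bounded degree with the sharpened constant $C(m,n,\delta)$ (Levin's Theorem 2 in \cite{Le}, which improves \cite[Theorem 5.2]{L}), together with the standard device of \cite[Lemma 3.4]{RW} that rewrites local heights at places of $k$ in terms of local heights summed over places of $k(P)$ lying above them.

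First, I would feed the data $X$, $A$, $D_1,\ldots,D_q$ directly into Levin's theorem. Because the $D_i$ are effective ample divisors numerically proportional to $A$ with $d_i A \equiv D_i$, the Seshadri-type quantities entering the general bound collapse to $\epsilon_{D_i}(A) = 1/d_i$ (by the same simplification used in the proof of Lemma 3.1, cf.\ the discussion in \cite[Page 126]{HL}). The $m$-subgeneral position assumption is exactly the hypothesis of Levin's theorem, so for algebraic points $P \in X(\overline{k})$ of degree at most $\delta$ over $k$, outside a proper Zariski closed subset, one obtains an inequality of shape
\[
\sum_{v \in S} \sum_{i=1}^{q} \dfrac{\lambda_v(D_i, P)}{d_i} < [C(m,n,\delta)+\epsilon]\, h(A,P) + O(1).
\]

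Second, I would pass from $\lambda_v(D_i, P)$ (the local height at a place $v \in M_k$ evaluated on an algebraic point $P$) to the sum $\sum_{w\mid v,\, w\in M_{k(P)}} \lambda_w(D_i, P)$ appearing in the statement; this identification, up to $O(1)$, is precisely the content of the Ru--Wang normalization lemma \cite[Lemma 3.4]{RW}. Substituting it into the inequality above yields the conclusion of the lemma verbatim.

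The main obstacle is routine bookkeeping: verifying that the Seshadri quantities genuinely reduce to $1/d_i$ under the numerical (rather than linear) equivalence hypothesis $D_i \equiv d_i A$, matching the local height normalizations from Section~\ref{sec:heights} with those used in \cite{Le} and \cite{RW} across the tower $k \subset k(P) \subset \overline{k}$, and combining the finitely many exceptional Zariski-closed subsets arising from each application of Levin's theorem into a single proper Zariski-closed subset of $X$. No further substantive ingredient beyond these two off-the-shelf inputs is required.
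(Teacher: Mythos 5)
Your proposal matches the paper's proof, which is likewise a one-line citation of Le's Theorem 2 combined with the Ru--Wang normalization device \cite[Lemma 3.4]{RW}. Two small corrections to your framing: Le's Theorem 2 treats divisors numerically equivalent to multiples of $A$ directly, so the $1/d_i$ factors come straight from the hypothesis $D_i \equiv d_i A$ rather than via Seshadri constants (that reduction is what is needed for Lemma 3.1, not here); and Le's result is Wirsing-type, so the inequality holds for \emph{all} bounded-degree points outside $\bigcup_i \operatorname{Supp} D_i$ with no exceptional Zariski-closed subset, exactly as the lemma states --- there are no exceptional sets to collect.
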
 When $X$ is $\mathbb{P}^2$ and the $D_j$'s are lines in general position, $C(m,n,\delta)$ to 15/2 can be improved as follows
\begin{Lemma}\cite[Theorem 1.5]{L} Let $S$ be a finite set of places of a number field $k$ let $L_{1} , . . . , L_{q} \subset \mathbb{P}^2$ be lines over $k$, in general position. Let $\epsilon > 0$. There there exists a finite union of lines $Z \subset X $ such that for all points $P \in \mathbb{P}^2(\overline{k}) \setminus Z$ satisfying $[k(P):k]\leq 2$ we have
$$
\sum_{v \in S}\sum_{\substack{w\mid v\\ w \in M_{k(P)}}} \sum_{i=1}^q {\lambda_w(L_{i},P)}<\left(\dfrac{15}{2}+\epsilon \right)h(P).
$$ 
\end{Lemma}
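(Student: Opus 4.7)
Since the lemma is cited as \cite[Theorem 1.5]{L}, the paper likely supplies only a reference; nonetheless, I would follow Levin's argument, which combines a Vojta-style filtration tailored to lines in general position in $\mathbb{P}^2$ with a reduction that handles quadratic points.

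The plan is first to pass to a finite Galois extension of $k$, enlarging $S$ if necessary, so that each relevant $P$ lies in a fixed quadratic extension $K/k$, and to group the local heights at places $w\mid v$ as the contributions of $P$ and its Galois conjugate $P^\sigma$. I would then apply Schmidt's subspace theorem over $K$ to a carefully chosen basis of $H^0(\mathbb{P}^2,\mathcal{O}(d))$ for large $d$, constructed by a lex filtration attached to a flag $L_i\supset\{p_i\}$ through a generic point $p_i$ on each line, so that the average vanishing order along $L_i$ is computed by integrating over the Newton--Okounkov body of $\mathcal{O}(1)$ with respect to this flag. A direct computation on $\mathbb{P}^2$ then gives the coefficient that yields the sharp bound $15/2$ once the filtrations are arranged compatibly across the $L_i$. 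The exceptional set $Z$ turns out to be a finite union of lines, namely the $L_j$'s themselves together with lines joining the intersection points $L_i\cap L_j$, coming from the excluded linear subvarieties appearing in the Schmidt conclusion applied to the sections just described.

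The main obstacle is twofold. First, achieving sharpness requires that the filtrations for distinct lines be made simultaneously compatible, which is precisely where the general-position hypothesis on the $L_j$'s enters in a nontrivial way; any loss at this step propagates directly into the constant and prevents one from reaching $15/2$. Second, the reduction to points of degree $\leq 2$ must be carried out without doubling the constant: this is handled by an averaging argument in the spirit of \cite[Lemma 3.4]{RW}, exploiting the fact that a point and its Galois conjugate contribute symmetrically to the weighted local height sum $\sum_{v\in S}\sum_{w\mid v}\lambda_w(L_i,P)$. Assembling these two ingredients, together with the standard fact that a finite union of linear subvarieties of $\mathbb{P}^2$ of positive codimension is contained in a finite union of lines, yields the stated inequality outside such a union $Z$.
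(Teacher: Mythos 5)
The paper gives no proof of this lemma: it is \cite[Theorem 1.5]{L}, cited as a black box, so there is nothing internal to compare your attempt against. What you have written is, as you acknowledge, a reconstruction of Levin's argument rather than an independent proof, and the relevant check is whether your reconstruction is accurate.

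Your sketch has the right broad shape for Levin's circle of ideas: a Schmidt-subspace input applied to a basis of $H^0(\mathbb{P}^2,\mathcal{O}(d))$ chosen by a filtration adapted to the divisors, plus a mechanism for passing from the rational to the degree-$\le 2$ case by grouping a point with its Galois conjugate. But several of the specifics you supply are speculative and should not be presented as though they were Levin's. In particular, the invocation of Newton--Okounkov bodies and a ``lex filtration attached to a flag $L_i\supset\{p_i\}$'' is your gloss, not a verified description of the construction behind the constant $15/2$; the genuinely delicate point in Levin's Theorem 1.5 is the specific combinatorics that produce $15/2$ for \emph{quadratic} points on $\mathbb{P}^2$ (as opposed to the weaker constant one gets from the general filtration machinery), and your sketch does not actually exhibit that computation, it only gestures at it. Likewise your claim that the exceptional set $Z$ consists precisely of the $L_j$ together with lines through the pairwise intersections $L_i\cap L_j$ is an unsupported guess: the Schmidt-subspace exceptional set is a finite union of hyperplanes, which in $\mathbb{P}^2$ gives a finite union of lines, but nothing you wrote pins down \emph{which} lines, and Levin's statement does not assert that description either. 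Finally, note that \cite[Lemma 3.4]{RW} is the ingredient the paper uses for Lemma 3.2 (the general Wirsing-type bound of Le), not for this sharp $\mathbb{P}^2$ case; importing it here is not obviously the route Levin takes. In short: plausible outline, but the two load-bearing steps you identify as ``obstacles'' (compatibility of the filtrations and the degree-$2$ reduction without loss) are exactly where your sketch is thinnest, and neither is actually carried out.
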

\section{Preliminaries on correspondences}\label{corresp}
 Let $X$ be an algebraic variety defined over
some field $k$, and let $C$ be a correspondence on $X$, by which we mean a subvariety
$C \subset X \times X$ such that the coordinate projections are both finite and surjective.

We  think of the points $X(k)$ as a set of vertices, with the points $C(k)$
defining directed edges between those vertices.  There is a \textit{path space} $\pi: \mathcal{P} \rightarrow X$ parametrizing paths through
the aforementioned directed graph, with initial vertex indicated by $\pi$, as well as a shift map $\sigma : \mathcal{P} \rightarrow \mathcal{P}$ which corresponds to stepping forward once
along the path (or, equivalently, forgetting the first vertex of a path). This is described in the first lemma below.

Let $x, y : C \rightarrow X$ be the coordinate projections. We will say that this correspondence is polarized if and only if there exists an ample Cartier divisor $L \in Div(X)\otimes \mathbb{R}$ and a real number $\alpha> 1$ such that $y^*L$ is linearly equivalent to $\alpha x^*L$. It is easy to check that in the case where $C$ is the graph of a morphism $f : X \rightarrow X$, the correspondence is polarized just in case the algebraic dynamical system is polarized in the usual sense.

\begin{Lemma}\cite[Theorem 2.1]{I}  Let $X$ be a separated, integral $S$-scheme of finite type and let $C$ be a correspondence in the $S$-scheme $X$ (a subscheme of $X \times_S X$ such that the two projections are finite and surjective). Then there is a separated, integral, $S$-scheme $\mathcal{P}$, surjective morphisms  $\pi: \mathcal{P} \rightarrow X, \epsilon: \mathcal{P} \rightarrow C$ and a finite surjective morphism $\sigma:\mathcal{P} \rightarrow \mathcal{P}$ such that $\pi=x\circ \epsilon$ and $\pi \circ \sigma=y\circ \epsilon.$
\end{Lemma}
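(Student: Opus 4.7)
The plan is to construct $\mathcal{P}$ as an inverse limit of finite-length path schemes and then verify the listed properties one by one.

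First, for each $n \geq 0$ define the $n$-step path scheme $\mathcal{P}_n := C \times_X C \times_X \cdots \times_X C$ consisting of $n$ copies of $C$, where each fiber product uses $y$ on the left and $x$ on the right. Set-theoretically $\mathcal{P}_n$ parametrizes sequences $(P_0, P_1, \ldots, P_n) \in X^{n+1}$ with $(P_i, P_{i+1}) \in C$ for $i=0,\ldots,n-1$; in particular $\mathcal{P}_0 = X$ and $\mathcal{P}_1 = C$. Because $x$ and $y$ are finite and surjective, the truncation map $\mathcal{P}_{n+1} \to \mathcal{P}_n$ that forgets the last vertex is the base change of $x : C \to X$ along the projection of $\mathcal{P}_n$ to the last coordinate, and is therefore itself finite and surjective. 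Separatedness and finite-type descend inductively from the corresponding properties of $C \to \operatorname{Spec} S$; integrality at each finite stage requires a little more care, since fiber products of integral schemes need not be integral.

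Second, form $\mathcal{P} := \varprojlim_n \mathcal{P}_n$ in the category of $S$-schemes. Since every transition map is finite, hence affine, this inverse limit is represented by an honest $S$-scheme (the relative $\operatorname{Spec}$ over $X$ of the filtered direct limit of the corresponding sheaves of algebras), and separatedness and integrality are preserved. Let $\pi : \mathcal{P} \to X = \mathcal{P}_0$ and $\epsilon : \mathcal{P} \to \mathcal{P}_1 = C$ be the tautological projections, and define $\sigma : \mathcal{P} \to \mathcal{P}$ from the compatible system of shift maps $(P_0, P_1, \ldots, P_{n+1}) \mapsto (P_1, \ldots, P_{n+1})$ from $\mathcal{P}_{n+1}$ to $\mathcal{P}_n$. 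The identities $\pi = x \circ \epsilon$ and $\pi \circ \sigma = y \circ \epsilon$ unwind to the tautological statements that the initial vertex of a path is the source of its first edge and that the second vertex is the target of its first edge. Surjectivity of $\pi$ and $\epsilon$, as well as finiteness and surjectivity of $\sigma$, follow from the analogous properties of $x$, $y$ and of the shifts at each finite stage.

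The main obstacle is the integrality of $\mathcal{P}_n$ at each finite level, since base-changing an integral scheme along a finite surjective morphism can introduce several irreducible components. To handle this one either restricts at each step to the canonical reduced, irreducible component dominating $\mathcal{P}_{n-1}$ (as in Ingram's construction) or invokes the separability of the function-field extension induced by $x$ in the characteristic-zero setting relevant here; in either case the integrality then propagates up the tower and survives the affine inverse limit. Once this is in place, the remaining verifications are routine diagram chases using the universal property of the limit.
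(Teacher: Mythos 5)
Your inverse-limit construction of $\mathcal{P}$ via the finite-length path schemes $\mathcal{P}_n = C \times_X \cdots \times_X C$ is the right framework and matches the shape of Ingram's argument, and you are right that all the stated properties except integrality follow by routine base-change and limit arguments (truncation and shift maps between the $\mathcal{P}_n$ are base changes of the finite surjective maps $x$ and $y$, and $\sigma$ on the limit is, up to the natural identification $\mathcal{P} \cong C \times_{y,X,\pi} \mathcal{P}$, a base change of $y$). However, your handling of the integrality obstruction --- which you correctly flag as the crux --- does not actually close the gap, and the two fixes you offer are each inadequate.

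First, the ``separability'' route is simply false: even in characteristic zero, the fiber product of integral schemes over an integral base along finite surjective morphisms can be reducible. A concrete example: take $C = \{s^2 + t^2 = 1\} \subset \mathbb{A}^1 \times \mathbb{A}^1$ with $x, y$ the two coordinate projections (both finite of degree $2$); then $\mathcal{P}_2 = C \times_{y,\mathbb{A}^1,x} C$ is cut out by $s^2 + t^2 = 1$ and $t^2 + t'^2 = 1$, which forces $s^2 = t'^2$ and hence splits into the two components $\{s = t'\}$ and $\{s = -t'\}$. Second, ``restricting to the canonical irreducible component dominating $\mathcal{P}_{n-1}$'' is not well posed: there is in general no distinguished such component (again see the example), and --- more seriously --- choosing components $\mathcal{P}_n' \subset \mathcal{P}_n$ compatibly with the \emph{truncation} maps says nothing about whether the \emph{shift} maps $\mathcal{P}_{n+1} \to \mathcal{P}_n$ carry $\mathcal{P}_{n+1}'$ into $\mathcal{P}_n'$, which is exactly what you need for $\sigma$ to descend to a self-map of $\varprojlim \mathcal{P}_n'$. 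Without an argument that a system of components can be chosen invariantly under both families of transition maps (and that the resulting restriction of $\sigma$ remains finite and surjective onto the chosen limit), the claim that $\mathcal{P}$ can be taken integral is an assertion, not a proof. This is precisely the content that Ingram's Theorem~2.1 supplies and that the paper is citing; your write-up reproduces the scaffolding but leaves the load-bearing step unproved.
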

Now, we define a generalization of a pullback of an iterated map for the context of correspondences that will allow us to generalize integrality results to this setting.
\begin{Def}
 Let $D$ be a divisor of $X$ and $C$ a correspondence on $X$ with coordinate projections $x, y : C \rightarrow X$. We say that a divisor $D^{(n)}$ of $X$ is an $n$-th \textit{Correspondence Iterate Pullback (CIP) of $D$ with respect to $C$}  if there are divisors $L_1,...,L_{n-1}$ of $X$ such that, making $L_0=D$ and $L_n=D^{(n)}$, we have 
 $$
 y^*L_{i-1}=x^*L_i \text{ for all } i=1,...,n.
 $$
\end{Def}

For our results, we will generally assume that an effective divisor $D$ of $X$ has an $m$-th (CIP) $D^{(m)}$  with respect to  a correspondence $C$ such that $D^{(m)}=D_1+...+D_q +D'$ is a sum of effective divisors where $D_1,...,D_q$ are in subgeneral position.

\section{Proofs and results}
\subsection{Sparsity of integral points in orbits}
Here we present some new results in light of the new diophantine approximation result of He and Ru.

Moreover, we prove results in semigroup dynamics. Namely, for $\phi_1,...,\phi_\ell:X\rightarrow X$ not necessarily commuting, we consider $\mathcal{F}:=<\phi_1,...,\phi_\ell>$ the semigroup generated by $\phi_1,...,\phi_\ell$ with the composition of functions as its operation.
\begin{Th}\label{thm:evefercons}
Let $X$ be a projective variety of dimension $n$, $S$ be a finite set of places, $\phi_1, \ldots, \phi_\ell$ be endomorphisms of $X$, and $\mathcal F = \langle \phi_1, \ldots, \phi_\ell \rangle$ all def. over a number field $k$.  Suppose there exist $ \psi_1, \ldots, \psi_m\in \mathcal F$, a nontrivial effective divisor $D$ and $A$ an ample divisor of $X$  such that
\begin{itemize}
\item[\emph{(i)}] $\psi_i^*(D) = D_{i1}  + D_{i2} + \cdots +D_{i, q_i} + D_{i}'$ is a sum of effective divisors, where $D_{i1}, \ldots, D_{i, q_i}$ are in $l_i$-subgeneral position for some $l_i\geq n$
\item[\emph{(ii)}] There exist $c_{ij} \in \mathbb{Q}$ such that
$ \sum_{j=1}^{q_i} c_{ij}{D_{ij}} - (l_i-n+1)(n+1) A$ is big and $A-c_{ij}D_{ij}$ is $\mathbb{Q}$-nef for all $i,j$.
\item[\emph{(iii)}] $\displaystyle \mathcal F \setminus \bigcup_{i=1}^m (\psi_i \circ\mathcal F)$ is finite.
\end{itemize}
Then there exists $\epsilon >0$ such that for all $P\in X(k)$, the set
\begin{equation}\label{HeRu}
\big\{\phi(P): \phi\in \mathcal F, \sum_{v\notin S} \lambda_v(D, \phi(P)) \le \epsilon h(A,\phi(P))\big\}
\end{equation}is not Zariski dense.
\end{Th}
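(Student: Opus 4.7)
The plan is to argue by contradiction via Northcott's theorem: assume that for every $\epsilon>0$ the set in the conclusion is Zariski dense in $X$, and derive a uniform upper bound on $h(A,Q)$ for a Zariski dense collection of points $Q\in X(k)$ that we will construct.

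\emph{Reduction.} By hypothesis (iii), each $\phi\in\mathcal F$ outside a finite set factors as $\phi=\psi_i\circ\eta$ for some $i\in\{1,\dots,m\}$ and $\eta\in\mathcal F$. Write the integrality set in the conclusion as $E$ and set $E_i:=\{\eta(P):\eta\in\mathcal F,\ \psi_i(\eta(P))\in E\}$, so that $E\setminus(\text{finite})\subseteq\bigcup_i\psi_i(E_i)$. If $E$ is Zariski dense in the irreducible projective variety $X$, then some $\psi_i(E_i)$ is Zariski dense; because $\psi_i$ is a proper morphism of $X$, we have $\overline{\psi_i(E_i)}=\psi_i(\overline{E_i})$, and the inequality $\dim\psi_i(\overline{E_i})\leq\dim\overline{E_i}$ forces $\overline{E_i}=X$. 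Fix this index $i$, and note that all $Q\in E_i$ lie in $X(k)$.

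\emph{Lower bound via He--Ru and bigness.} Apply Lemma 3.1 to $D_{i1},\dots,D_{iq_i}$ with the rationals $c_{ij}$ of (ii): for any preselected $\epsilon'>0$ there is a proper Zariski closed $Z_i\subset X$ such that
\[
\sum_{v\in S}\sum_j c_{ij}\,\lambda_v(D_{ij},Q)\;<\;[(l_i-n+1)(n+1)+\epsilon']\,h(A,Q)+O(1)
\]
for every $Q\in X(k)\setminus Z_i$. Independently, bigness of $\sum_j c_{ij}D_{ij}-(l_i-n+1)(n+1)A$ yields a $\mathbb Q$-equivalence $\sum_j c_{ij}D_{ij}\equiv((l_i-n+1)(n+1)+\delta_i)A+F_i$ with $F_i$ effective and some $\delta_i>0$, whence $\sum_j c_{ij}h(D_{ij},Q)\geq((l_i-n+1)(n+1)+\delta_i)h(A,Q)+O(1)$ for $Q\notin |F_i|$. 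Using $h=\sum_v\lambda_v+O(1)$ and subtracting the He--Ru estimate produces
\[
\sum_{v\notin S}\sum_j c_{ij}\,\lambda_v(D_{ij},Q)\;\geq\;(\delta_i-\epsilon')\,h(A,Q)+O(1).
\]

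\emph{Closing the argument.} For $Q=\eta(P)$ with $\psi_i(Q)\in E$, functoriality gives $\lambda_v(D,\psi_i(Q))=\sum_j\lambda_v(D_{ij},Q)+\lambda_v(D_i',Q)+O(1)$; effectivity of $D_i'$ bounds $\lambda_v(D_i',Q)$ below, so the integrality hypothesis becomes $\sum_{v\notin S}\sum_j\lambda_v(D_{ij},Q)\leq\epsilon\,h(A,\psi_i(Q))+O(1)$. Ampleness of $A$ furnishes a constant $\kappa_i$ with $\kappa_iA-\psi_i^*A$ ample, so $h(A,\psi_i(Q))\leq\kappa_ih(A,Q)+O(1)$; absorbing the coefficients $c_{ij}\leq C_i:=\max_j c_{ij}$ (positive, since otherwise the divisor in the bigness hypothesis of (ii) would fail to be big) into an $O(1)$ correction via effectivity of the $D_{ij}$ delivers
\[
(\delta_i-\epsilon')\,h(A,Q)\;\leq\;C_i\kappa_i\,\epsilon\,h(A,Q)+O(1).
\]
Choosing $\epsilon'<\delta_i/2$ and $\epsilon<\delta_i/(4C_i\kappa_i)$ uniformly in the finitely many $i$ then forces $h(A,Q)=O(1)$ for every $Q\in E_i\setminus(Z_i\cup|F_i|)$, contradicting Zariski density of $E_i$ by Northcott's theorem. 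The principal obstacle is the reduction step—propagating Zariski density from $\psi_i(E_i)$ back to $E_i$ via properness of $\psi_i$ and irreducibility of $X$—together with careful bookkeeping of the several proper Zariski closed exceptional loci (from He--Ru, from the bigness decomposition, and from the supports of the $D_i'$) so that their union remains proper and is still met by $E_i$ in infinitely many points.
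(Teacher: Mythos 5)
Your proof is correct and uses the same essential machinery as the paper's: the factorization $\phi=\psi_i\circ\eta$ from hypothesis (iii), the He--Ru lemma, the height lower bound coming from bigness of $\sum_j c_{ij}D_{ij}-(l_i-n+1)(n+1)A$, the comparison $h(A,\psi_i(Q))\le\kappa_i h(A,Q)+O(1)$ from ampleness, and the final choice of $\epsilon$ small enough to make the leftover coefficient of $h(A,Q)$ positive. The organizational difference is that you run the argument by contradiction with Northcott, fixing an index $i$ for which $E_i$ is dense and then showing $E_i$ has bounded height outside a proper closed set, whereas the paper directly exhibits the exceptional locus as $\bigcup_i\bigl(\psi_i(Z'_i)\cup\psi_i(Z_1)\cup\psi_i(Z_2)\bigr)$ together with the finite contribution from $\mathcal F\setminus\bigcup_i(\psi_i\circ\mathcal F)$; these are equivalent since a morphism of projective varieties cannot increase dimension, a point you make explicit in your reduction step while the paper leaves it implicit. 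One small wording caution: rather than ``assume that for every $\epsilon>0$ the set is Zariski dense,'' it is cleaner to choose $\epsilon_0=\min_i \delta_i/(4C_i\kappa_i)$ (and $\epsilon'<\min_i\delta_i/2$ before invoking He--Ru) and then assume density for that specific $\epsilon_0$ and some $P$; as you note, every constant in the contradiction ($\delta_i$, $C_i$, $\kappa_i$, the $O(1)$'s, and the exceptional loci $Z_i$, $|F_i|$) is independent of $P$, so the resulting $\epsilon_0$ works uniformly in $P$ as the theorem requires.
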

\begin{proof}
Suppose that $\phi(P)$ is in the set \eqref{HeRu}.  By condition the (ii), we can suppose that $\phi = \psi_i \circ \eta$ for some $\eta \in \mathcal F$ and $1\le i\le m$.  By functoriality of heights,
\begin{equation}\begin{split}
\sum_{v\notin S} &\lambda_v(D, \phi(P)) \sim h(D, \psi_i(\eta(P))) - \sum_{v\in S} \lambda_v(D,\psi_i (\eta(P))) \\
&\sim h(\psi_i^*(D), \eta(P))  - \sum_{v\in S} \lambda_v(\psi_i^*(D), \eta(P)) \\
&\sim \sum_{v\notin S} \lambda_v(\psi_i^*(D), \eta(P)) \\
&\ge \sum_{j=1}^{q_i} \sum_{v\notin S} \lambda_v(D_{ij}, \eta(P)) \label{eq1} \end{split}
\end{equation}
where we use $\sim$ to indicate that the difference of both sides is a bounded function on Zariski-dense subset of $X(k)$.  On the other hand, since $L_i:=\sum_{j=1}^{q_i} c_{ij}{D_{ij}} - (l_i-n+1)(n+1) A$ is big, there exists a proper Zariski-closed set $Z'_i$ and a positive constant $c_i'$ such that
\begin{equation}\label{eq2}
h(L_i, Q) \ge  c_i' h(A,Q)
\end{equation}
for all $Q\notin Z'_i$.  In addition, because of ampleness and 
\begin{equation}\label{eq3}
h(A,\psi_i(Q))\sim h(\psi_i^*(A),Q) 
\end{equation}
for all $Q$ (functoriality), there exists $c_i'' \ge 1$ not depending on $P$ such that
\begin{equation}\label{eq4}
h(A,\psi_i(\eta(P))) \le c_i'' h(A,\eta(P)).
\end{equation}
  Therefore,
applying the above with $Q = \eta(P)$, combining with \ref{eq2}, \ref{eq3}, \ref{eq4}, and the condition in \ref{HeRu}, we obtain
\begin{align*}
& h (-(l_i-n+1)(n+1) A, \eta(P)) + \sum_{j=1}^{q_i} \sum_{v\in S} {\lambda_v(c_{ij}D_{ij}, \eta(P))}\\
&\sim h\left(- (l_i-n+1)(n+1) A + \sum_{j=1}^{q_i} c_{ij}{D_{ij}}, \eta(P)\right) - \sum_{j=1}^{q_i} \sum_{v\notin S} {\lambda_v(c_{ij}D_{ij}, \eta(P))}\\
&\ge  c_i' h(A,\eta(P)) -  \left(\max_{j} c_{ij}\right) \sum_{v\notin S} \lambda_v(D, \phi(P))\\
&\ge  c_i' h(A,\eta(P)) - \epsilon\cdot  \left(\max_{j} c_{ij}\right) h(A,\phi(P))\\
&\ge c_i' h(A,\eta(P)) - \epsilon \max_{j} {c_i''}c_{ij}h(A,\eta(P)).
\end{align*}
Therefore, whenever 
\begin{equation}\label{eq:epsiloncond}
\epsilon < \displaystyle \min_i \left(\frac {c_i'}{c_i''} \min_j 1/c_{ij}\right),
\end{equation}
Lemma 3.1  applied $m$ times to each system of divisors $\{D_{ij}\}_{j=0,...,q_i}$ implies that there exists a union $Z_1$ of varieties such that $\eta(P) \in Z_1'$, or it belongs to a set $Z_2$ of bounded height. Therefore, this argument shows that if $\phi(P)$ is in the set \ref{HeRu}, then $\phi(P)= \psi_i(\eta(P)) \in \psi_i(Z'_i) \cup \psi_i(Z_1) \cup \psi_i(Z_2)$, which concludes the proof.
\end{proof}

\begin{Cor}
Let $X$ be a projective variety of dimension $n$, $S$ be a finite set of places, $\phi_1, \ldots, \phi_\ell$ be endomorphisms of $X$, and $\mathcal F = \langle \phi_1, \ldots, \phi_\ell \rangle$ all def. over a number field $k$.  Suppose there exist $ \psi_1, \ldots, \psi_m\in \mathcal F$, a nontrivial effective divisor $D$ and $A$ an ample divisor of $X$  such that
\begin{itemize}
\item[\emph{(i)}] $\psi_i^*(D) = D_{i1}  + D_{i2} + \cdots +D_{i, q_i} + D_{i}'$ is a sum of effective divisors, $D_{i1}, \ldots, D_{i, q_i}$ are in $l_i$-subgeneral position for some $l_i\geq n$,
\item[\emph{(i)}] $ \sum_{j=1}^{q_i} \frac{D_{ij}}{d_{ij}} - (l_i-n+1)(n+1) A$ is big for all $i$ and $D_{ij}\equiv d_{ij}A $ where $d_{ij}$ are positive integers  for all $ i,j$.
\item[\emph{(iii)}] $\displaystyle \mathcal F \setminus \bigcup_{i=1}^m (\psi_i \circ\mathcal F)$ is finite.
\end{itemize}
Then there exists $ \epsilon>0$ such that for all $P\in X(k)$, the set:
\begin{equation*}
\big\{\phi(P): \phi\in \mathcal F, \sum_{v\notin S} \lambda_v(D, \phi(P)) \le \epsilon h(A,\phi(P))\big\}
\end{equation*} is not Zariski dense
\end{Cor}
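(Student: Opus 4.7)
The plan is to deduce this corollary directly from Theorem \ref{thm:evefercons} by specializing the rational constants $c_{ij}$ in the appropriate way. The hypotheses (i) and (iii) are already identical between the two statements, so the only work is to show that hypothesis (ii) of the corollary implies hypothesis (ii) of the theorem, with a concrete choice of $c_{ij}$.

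First I would set $c_{ij} := 1/d_{ij} \in \mathbb{Q}_{>0}$. Under this choice, the divisor appearing in Theorem \ref{thm:evefercons}(ii), namely $\sum_{j=1}^{q_i} c_{ij} D_{ij} - (l_i-n+1)(n+1) A$, coincides exactly with $\sum_{j=1}^{q_i} \frac{D_{ij}}{d_{ij}} - (l_i-n+1)(n+1) A$, which is big by the corollary's hypothesis. The second condition to check is that $A - c_{ij} D_{ij}$ is $\mathbb{Q}$-nef for each $i,j$. Because $D_{ij} \equiv d_{ij} A$ by assumption, we have $c_{ij} D_{ij} = D_{ij}/d_{ij} \equiv A$, so $A - c_{ij} D_{ij}$ is numerically equivalent to the zero divisor. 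Since nefness is a numerical property (a divisor is nef iff its intersection with every irreducible curve is nonnegative), any numerically trivial $\mathbb{Q}$-divisor is $\mathbb{Q}$-nef, and this second condition is satisfied.

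With both clauses of hypothesis (ii) of Theorem \ref{thm:evefercons} verified under the assignment $c_{ij} = 1/d_{ij}$, and with hypotheses (i) and (iii) identical in both statements, the conclusion of the corollary follows immediately by invoking Theorem \ref{thm:evefercons}. There is no substantive obstacle here: the corollary is simply a more user-friendly repackaging of the theorem, replacing the abstract rational constants $c_{ij}$ (which must simultaneously satisfy a bigness condition and a nefness condition) with explicit integers $d_{ij}$ encoding a numerical equivalence with the fixed ample divisor $A$. The only verification is the elementary observation about numerical triviality implying nefness.
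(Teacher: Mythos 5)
Your proof is correct and matches the paper's approach exactly: the paper also deduces this corollary from Theorem \ref{thm:evefercons} by setting $c_{ij}=1/d_{ij}$. You supply the small verification that $D_{ij}\equiv d_{ij}A$ makes $A-c_{ij}D_{ij}$ numerically trivial and hence $\mathbb{Q}$-nef, which the paper leaves implicit.
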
\begin{proof}
This is a consequence of the previous theorem applied with $c_{ij}=1/d_{ij}.$
\end{proof}

\begin{Cor}
Let $X$ be a projective variety of dimension $n$, $S$ be a finite set of places and $\phi$ be an endomorphism of $X$ all def. over a number $k$.  Suppose there exist $ m\geq 0$, a nontrivial effective divisor $D$ and $A$ an ample divisor of $X$  such that
\begin{itemize}
\item[\emph{(i)}] $(\phi^{(m)})^*(D) = D_{1}  + D_{2} + \cdots +D_{ q} + D'$  is a sum of effective divisors, $D_{1}, \ldots, D_{q}$ are in $l$-subgeneral position for some $l\geq n$,
\item[\emph{(ii)}] $ \sum_{j=1}^{q} \frac{D_{j}}{d_{j}} - (l-n+1)(n+1) A$ is big  and $D_{j}\equiv d_{j}A $   where $d_{j}$ are positive integers for all $j$.
\end{itemize}
Then there exists $ \epsilon$ such that for all $P\in X(k)$, the set
\begin{equation*}
\big\{\phi^{(n)}(P):  n \in \mathbb{N}, \sum_{v\notin S} \lambda_v(D, \phi^{(n)}(P) \le \epsilon h(A,\phi^{(n)}(P)\big\}
\end{equation*} is not Zariski dense.

\end{Cor}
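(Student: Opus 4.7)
The statement is the single-map specialization of Corollary 5.2, so the plan is to derive it as an immediate consequence by setting up the right semigroup. Specifically, I would take $\ell=1$ in Corollary 5.2, so that the semigroup becomes $\mathcal{F}=\langle \phi\rangle = \{\phi,\phi^{(2)},\phi^{(3)},\ldots\}$, and I would use a single element $\psi_1:=\phi^{(m)}$ in the list $\psi_1,\ldots,\psi_m$ of that corollary (overloading the symbol $m$ between the two statements).

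With this setup, condition (i) of Corollary 5.2 reads $\psi_1^*(D)=(\phi^{(m)})^*(D)=D_1+\cdots+D_q+D'$ with the $D_j$ in $l$-subgeneral position, which is exactly hypothesis (i) here. Condition (ii) of the corollary reads that $\sum_{j=1}^{q} D_j/d_j - (l-n+1)(n+1)A$ is big with $D_j\equiv d_j A$, which is exactly hypothesis (ii). So (i) and (ii) transfer directly.

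The only nontrivial thing to check is condition (iii) of Corollary 5.2: that $\mathcal{F}\setminus (\psi_1\circ \mathcal{F})$ is finite. But $\psi_1\circ\mathcal{F} = \phi^{(m)}\circ \langle\phi\rangle = \{\phi^{(m)},\phi^{(m+1)},\phi^{(m+2)},\ldots\}$, so
\[
\mathcal{F}\setminus (\psi_1\circ \mathcal{F})\subseteq\{\phi,\phi^{(2)},\ldots,\phi^{(m-1)}\},
\]
which is a finite set. Hence Corollary 5.2 applies and yields an $\epsilon>0$ such that the set $\{\phi(P):\phi\in\mathcal F,\ \sum_{v\notin S}\lambda_v(D,\phi(P))\le \epsilon h(A,\phi(P))\}$ is not Zariski dense.

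To finish, I would observe that the set displayed in the corollary we are proving is precisely a subset of the set produced by Corollary 5.2, since each $\phi^{(n)}$ with $n\ge 1$ belongs to $\mathcal{F}$; iterates $\phi^{(n)}$ with $n<m$ contribute only finitely many points in any orbit-data set, hence are absorbed into a Zariski-closed (even finite) piece and do not affect Zariski non-density. There is essentially no technical obstacle: the content is entirely in Theorem 5.1/Corollary 5.2, and the present statement is the bookkeeping reduction to one generator.
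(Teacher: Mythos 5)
Your proposal is correct and takes essentially the same route the paper intends: Corollary 5.3 is stated without proof immediately after Corollary 5.2 precisely because it is the single-generator specialization with $\psi_1=\phi^{(m)}$, for which condition (iii) of the semigroup statement holds trivially. One tiny slip worth noting: with the semigroup $\mathcal{F}=\{\phi,\phi^{(2)},\ldots\}$ one has $\psi_1\circ\mathcal{F}=\{\phi^{(m+1)},\phi^{(m+2)},\ldots\}$ rather than starting at $\phi^{(m)}$, but this still leaves $\mathcal{F}\setminus(\psi_1\circ\mathcal{F})$ finite, so the argument is unaffected.
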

\subsection{Algebraic points of bounded degree in orbits}
Here we prove some new statements on sparsity of integral points of bounded degree in orbits, that are also uniform in the sense that they don't depend of the initial orbit points. 
\begin{Th}\label{boundeddegree}
Let $X$ be a projective variety of dimension $n$, $S$ be a finite set of places, $\phi_1, \ldots, \phi_\ell$ be endomorphisms of $X$, and $\mathcal F = \langle \phi_1, \ldots, \phi_\ell \rangle$ all def. over a number $k$.  Suppose there exist $ \psi_1, \ldots, \psi_m\in \mathcal F$, a nontrivial effective divisor $D$ and $A$ an ample divisor of $X$  such that
\begin{itemize}
\item[\emph{(i)}] $\psi_i^*(D) = D_{i1} + \cdots +D_{i, q_i} + D_{i}'$ is a sum of effective ample divisors, where $D_{i1}, \ldots, D_{i, q_i}$ are in $l_i$-subgeneral position with $l_i \geq n$
\item[\emph{(ii)}]$ \sum_{j=1}^{q_i} \frac{D_{ij}}{d_{ij}} - C(l_i,n,\delta) A$ is big  and $D_{ij}\equiv d_{ij}A $  where $d_{ij}$ are positive integers  for all $ i,j$.
\end{itemize}
Then there exists $ \epsilon>0$ such that  the set
\begin{equation}\label{Lev}
\left\{\phi(P): \phi\in \bigcup_{i=1}^m (\psi_i \circ\mathcal F), [k(P):k]\leq \delta, \sum_{v\notin S} \sum_{\substack{w\mid v\\ w \in M_{k(P)}}} \lambda_w(D, \phi(P)) \le \epsilon h(A,\phi(P))\right\}
\end{equation} is not Zariski dense.

\end{Th}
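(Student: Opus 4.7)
The plan is to adapt the proof of Theorem \ref{thm:evefercons} in two ways: first, use the new Schmidt-type theorem for bounded-degree algebraic points (Lemma 3.2) in place of the He--Ru theorem (Lemma 3.1); second, specialize the coefficients $c_{ij}$ to $1/d_{ij}$, which is consistent thanks to the numerical equivalences $D_{ij}\equiv d_{ij}A$. Because the integrality set already restricts $\phi$ to $\bigcup_{i=1}^m(\psi_i\circ\mathcal F)$, no finiteness hypothesis analogous to condition (iii) of Theorem \ref{thm:evefercons} is needed: every element $\phi(P)$ of the set factors as $\psi_i(\eta(P))$ for some $1\le i\le m$ and $\eta\in\mathcal F$.

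Fix such a factorization. Functoriality of local heights with respect to $\psi_i$ (which is defined over $k$), combined with effectiveness of $D_i'$, yields
$$
\sum_{v\notin S}\sum_{\substack{w\mid v\\ w\in M_{k(P)}}}\lambda_w(D,\phi(P))\;\ge\;\sum_{j=1}^{q_i}\sum_{v\notin S}\sum_{\substack{w\mid v\\ w\in M_{k(P)}}}\lambda_w(D_{ij},\eta(P))+O(1).
$$
Exploiting the global identity $h(D_{ij},\eta(P))=\sum_{v\in M_k}\sum_{w\mid v}\lambda_w(D_{ij},\eta(P))+O(1)$, dividing by $d_{ij}$ and summing over $j$, using bigness of $\sum_{j=1}^{q_i}D_{ij}/d_{ij}-C(l_i,n,\delta)A$ to produce $c_i'>0$ with $\sum_j h(D_{ij},\eta(P))/d_{ij}\ge(C(l_i,n,\delta)+c_i')h(A,\eta(P))$ outside a proper Zariski-closed $Z_i'\subset X$, together with the estimate $h(A,\phi(P))\le c_i''h(A,\eta(P))$ coming from ampleness of $A$ and functoriality through $\psi_i$, I obtain
$$
\sum_{v\in S}\sum_{\substack{w\mid v\\ w\in M_{k(P)}}}\sum_{j=1}^{q_i}\frac{\lambda_w(D_{ij},\eta(P))}{d_{ij}}\;\ge\;\bigl(C(l_i,n,\delta)+c_i'-\epsilon\alpha_i\bigr)h(A,\eta(P))+O(1),
$$
for a constant $\alpha_i$ depending only on the $d_{ij}$, $q_i$, and $c_i''$.

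For $\epsilon<\min_i c_i'/\alpha_i$ this contradicts Lemma 3.2 applied to $\eta(P)$ with the effective ample divisors $D_{i1},\dots,D_{i,q_i}$ in $l_i$-subgeneral position and the ample divisor $A$ (using $d_{ij}A\equiv D_{ij}$ and $[k(\eta(P)):k]\le\delta$), unless $\eta(P)$ belongs to a proper Zariski-closed subset $Z_i''\subset X$. Hence $\phi(P)=\psi_i(\eta(P))\in\psi_i(Z_i'\cup Z_i'')$. Taking the union over the finitely many $1\le i\le m$ produces a proper Zariski-closed subset of $X$ containing the set \eqref{Lev}, as desired.

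The main obstacle will be the careful bookkeeping of places as $P$ ranges over algebraic points of degree up to $\delta$: Lemma 3.2 sums places $w\in M_{k(P)}$ lying over $v\in S$, while the integrality hypothesis sums them over $v\notin S$, yet the set $M_{k(P)}$ itself depends on $P$. I must ensure that the functoriality identity $\lambda_w(\psi_i^*D,\eta(P))=\lambda_w(D,\psi_i(\eta(P)))+O(1)$ and the global height decomposition both hold with error terms uniform in the extension $k(P)/k$, since $\psi_i$ is defined over $k$ but $\eta(P)$ is only defined over $k(P)$. A secondary technicality is verifying that bigness of $\sum_j D_{ij}/d_{ij}-C(l_i,n,\delta)A$ yields a constant $c_i'>0$ uniform in $\eta(P)$ outside $Z_i'$, which follows from the standard characterization of bigness in terms of height functions but should be stated explicitly.
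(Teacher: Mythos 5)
Your proposal follows essentially the same route as the paper's proof of Theorem \ref{boundeddegree}: factor $\phi = \psi_i \circ \eta$, push local heights back to $\eta(P)$ by functoriality and effectiveness of $D_i'$, use bigness of $\sum_j D_{ij}/d_{ij} - C(l_i,n,\delta)A$ to compare against $h(A,\eta(P))$ outside a proper closed set, use $h(A,\phi(P))\le c_i'' h(A,\eta(P))$, and then invoke Lemma 3.2 with $D_{ij}\equiv d_{ij}A$ — indeed the specialization $c_{ij}=1/d_{ij}$ is precisely what the paper uses, and your observation that no analogue of condition (iii) is needed is correct since the integrality set is already restricted to $\bigcup_i(\psi_i\circ\mathcal F)$. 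One point to tighten: Lemma 3.2 as stated has no exceptional Zariski-closed set, so outside $Z_i'$ the comparison does not directly yield a ``$Z_i''$'' but rather a uniform bound on $h(A,\eta(P))$ once the $O(1)$ terms are absorbed; you then need Northcott (valid since $[k(\eta(P)):k]\le\delta$) to conclude these $\eta(P)$ form a finite, hence Zariski-closed, set — this is the ``set $Z_2$ of bounded height'' appearing explicitly in the paper's proof.
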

\begin{proof} The proof follows the lines of the proof of Theorem 5.1, adapted to points in extensions of bounded degree.
Suppose that $\phi(P)$ is in the set \eqref{Lev}.  Then $\phi = \psi_i \circ \eta$ for some $\eta \in \mathcal F$ and $1\le i\le m$.  By functoriality of heights,
\begin{equation}\begin{split}
\sum_{v\notin S}  \sum_{\substack{w\mid v\\ w \in M_{k(P)}}}&\lambda_v(D, \phi(P)) \sim h(D, \psi_i(\eta(P))) - \sum_{v\in S}  \sum_{\substack{w\mid v\\ w \in M_{k(P)}}} \lambda_v(D,\psi_i (\eta(P))) \\
&\sim h(\psi_i^*(D), \eta(P))  - \sum_{v\in S}  \sum_{\substack{w\mid v\\ w \in M_{k(P)}}} \lambda_v(\psi_i^*(D), \eta(P)) \\
&\sim \sum_{v\notin S}  \sum_{\substack{w\mid v\\ w \in M_{k(P)}}} \lambda_v(\psi_i^*(D), \eta(P)) \\
&\ge \sum_{j=1}^{q_i} \sum_{v\notin S}   \sum_{\substack{w\mid v\\ w \in M_{k(P)}}}\lambda_v(D_{ij}, \eta(P)) \label{eq1}.\end{split}
\end{equation}
  On the other hand, since $L_i:=\sum_{j=1}^{q_i} \frac{D_{ij}}{d_{ij}} - C(l_i,n,\delta) A$ is big, there exists a proper Zariski-closed set $Z'_i$ and a positive constant $c_i'$ such that
\begin{equation}\label{eq6}
h(L_i, Q) \ge  c_i' h(A,Q)
\end{equation}
for all $Q\notin Z_i$.  In addition, because of ampleness and 
\begin{equation}\label{eq7}
h(A,\psi_i(Q))\sim h(\psi_i^*(A),Q) 
\end{equation}
for all $Q$ (functoriality), there exists $c_i'' \ge 1$ such that
\begin{equation}\label{eq8}
h(A,\psi_i(\eta(P))) \le c_i'' h(A,\eta(P))
\end{equation} for all $P$.
  Therefore,
applying the above with $Q = \eta(P)$, combining with \eqref{eq6}, \eqref{eq7}, \eqref{eq8} and the condition in \eqref{Lev}, we obtain
\begin{align*}
& h (-C(l_i,n,\delta) A, \eta(P)) + \sum_{j=0}^{q_i} \sum_{v\in S} \sum_{\substack{w\mid v\\ w \in M_{k(P)}}} {\lambda_w \left(\frac{D_{ij}}{d_{ij}}, \eta(P)\right)}\\
&\sim h\left(- C(l_i,n,\delta) A + \sum_{j=0}^{q_i} \frac{D_{ij}}{d_{ij}}, \eta(P)\right) - \sum_{j=0}^{q_i} \sum_{v\notin S} \sum_{\substack{w\mid v\\ w \in M_{k(P)}}} {\lambda_w\left(\frac{D_{ij}}{d_{ij}}, \eta(P)\right)}\\
&\ge  c_i' h(A,\eta(P)) -  \left(\max_{j} 1/d_{ij}\right) \sum_{v\notin S}\sum_{\substack{w\mid v\\ w \in M_{k(P)}}}  \lambda_w(D, \phi(P))\\
&\ge  c_i' h(A,\eta(P)) - \epsilon\cdot  \left(\max_{j} 1/d_{ij}\right) h(A,\phi(P))\\
&\ge c_i' h(A,\eta(P)) - \epsilon \max_{j} \frac{c_i''}{d_{ij}}h(A,\eta(P)).
\end{align*}
Therefore, whenever 
\begin{equation}\label{eq:epsiloncond}
\epsilon < \displaystyle \min_i \left(\frac {c_i'}{c_i''} \min_j d_{ij}\right),
\end{equation}
Lemma 3.2 applied $m$ times to each system of divisors $\{D_{ij}\}_{j=1,...,q_i}$ implies that there exists a union $Z_1$ of varieties such that $\eta(P) \in Z_1$, or it belongs to a set $Z_2$ of bounded height. Therefore, this argument shows that if $\phi(P)$ is in the set \eqref{Lev}, $\phi(P)= \psi_i(\eta(P)) \in \psi_i(Z'_i) \cup \psi_i(Z_1) \cup \psi_i(Z_2)$, which concludes the proof.
\end{proof}
\begin{Th}
Let  $\phi_1, \ldots, \phi_\ell$ be endomorphisms of $\mathbb{P}^2$, and $\mathcal F = \langle \phi_1, \ldots, \phi_\ell \rangle$ all def. over a number $k$ with a finite set of places $S$.  Suppose there exist $\psi_1, \ldots, \psi_m\in \mathcal F$ and a nontrivial effective divisor $D$ such that
\begin{itemize}
\item[\emph{(i)}] $\psi_i^*(D) = L_{i1}  + L_{i1} + \cdots +L_{i, q_i} + D_{i}'$  a sum of effective divisors, $L_{i1}, \ldots, L_{i, q_i}$ are lines in general position, and
$q_i>15/2$ for all $i$.

\end{itemize}
Then there exists $ \epsilon>0$ such that  the set
\begin{equation*}
\left\{\phi(P): \phi\in \bigcup_{i=1}^m (\psi_i \circ\mathcal F), [k(P):k]\leq 2, \sum_{v\notin S} \sum_{\substack{w\mid v\\ w \in M_{k(P)}}} \lambda_w(D, \phi(P)) \le \epsilon h(\phi(P))\right\}
\end{equation*} is not Zariski dense (here, h is the standard logarithmic height on $\mathbb{P}^2$).
\end{Th}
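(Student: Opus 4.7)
The plan is to specialize the proof of Theorem 5.4 to $\mathbb{P}^2$ with $A = \mathcal{O}_{\mathbb{P}^2}(1)$ and $d_{ij}=1$ for all $i,j$, using Lemma 3.3 in place of Lemma 3.2. Under these choices the bigness condition $\sum_j D_{ij}/d_{ij} - C(l,n,\delta) A$ collapses to the given hypothesis $q_i > 15/2$. Fix $P$ with $[k(P):k] \leq 2$ and suppose $\phi(P)$ lies in the integrality set. Writing $\phi = \psi_i \circ \eta$ with $\eta \in \mathcal F$ and $1 \leq i \leq m$, functoriality of local heights together with the effectivity of $D_i'$ yields
\[
\sum_{v \notin S} \sum_{\substack{w \mid v \\ w \in M_{k(P)}}} \lambda_w(D, \phi(P)) \geq \sum_{j=1}^{q_i} \sum_{v \notin S} \sum_{\substack{w \mid v \\ w \in M_{k(P)}}} \lambda_w(L_{ij}, \eta(P)) + O(1)
\]
on the complement of $\mathrm{Supp}(D_i')$.

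Since each $L_{ij}$ is a line, $h(L_{ij}, \cdot) = h(\cdot) + O(1)$, so by the global-local decomposition the right-hand side equals $q_i \, h(\eta(P)) - \sum_j \sum_{v \in S} \sum_{w \mid v} \lambda_w(L_{ij}, \eta(P)) + O(1)$. Since each $\eta$ is defined over $k$, one has $[k(\eta(P)):k] \leq 2$, so Lemma 3.3 applies to the general-position lines $L_{i1}, \ldots, L_{i,q_i}$: off a finite union of lines $Z_i \subset \mathbb{P}^2$, the $S$-sum is at most $(15/2 + \epsilon')\, h(\eta(P))$ for any prescribed $\epsilon' > 0$. Combining with the integrality hypothesis and the standard bound $h(\phi(P)) \leq (\deg \psi_i)\, h(\eta(P)) + O(1)$, we obtain
\[
(q_i - 15/2 - \epsilon') \, h(\eta(P)) \leq \epsilon \, (\deg \psi_i) \, h(\eta(P)) + O(1).
\]

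Since $q_i > 15/2$ for every $i$, first pick $\epsilon' > 0$ small and then $\epsilon < \min_i (q_i - 15/2 - \epsilon')/\deg \psi_i$; this forces $h(\eta(P))$ to be bounded, and Northcott's theorem applied to degree-$\leq 2$ extensions of $k$ produces only finitely many such $\eta(P)$. Combined with the $Z_i$ alternative, $\eta(P)$ lies in $Z_i$ together with a finite set, so $\phi(P) = \psi_i(\eta(P))$ lies in $\psi_i(Z_i) \cup (\text{finite set})$, a proper Zariski closed subset of $\mathbb{P}^2$. Taking the union over the finitely many indices $1 \leq i \leq m$ contains the entire integrality set in a proper closed subset. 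The main technical point is ensuring that $\epsilon$ can be chosen uniformly across all $\psi_i$ (automatic from $m < \infty$) and verifying the degree-$2$ applicability of Lemma 3.3 via the chain $[k(P):k] \leq 2 \Rightarrow [k(\eta(P)):k] \leq 2$, which holds because each $\eta \in \mathcal F$ is $k$-rational.
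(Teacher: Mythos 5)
Your proposal is correct and follows essentially the same route as the paper: the paper's proof simply states that one specializes Theorem 5.4 to $X = \mathbb{P}^2$, $A \sim \mathcal{O}_{\mathbb{P}^2}(1)$, $d_{ij} = 1$, with $C(l_i,n,\delta)$ replaced by $15/2$ and Lemma 3.3 in place of Lemma 3.2. Your write-up spells out the abstract bigness step concretely by using that all lines on $\mathbb{P}^2$ are linearly equivalent to $\mathcal{O}(1)$ (so $h(L_{ij},\cdot) = h(\cdot) + O(1)$ and the big divisor $\sum_j L_{ij} - (15/2)\mathcal{O}(1)$ has positive degree $q_i - 15/2$), and correctly tracks the degree chain $[k(P):k]\leq 2 \Rightarrow [k(\eta(P)):k]\leq 2$ and the uniformity over the finitely many $\psi_i$; these are exactly the details the paper leaves implicit.
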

\begin{proof}
We can prove this similarly as in the proof of the previous theorem, adapting accordingly. In fact, we consider $X=\mathbb{P}^2$, $D_{ij}=L_{ij}$ to be lines in general position, and $A\sim O_{\mathbb{P}^2}(1)$ a hyperplane divisor representing a generator of the Picard group of $\mathbb{P}^2$, whose associated Weil height is the usual naive logarithmic height $h$ of $\mathbb{P}^2$ in the statement of the Theorem 5.4. In this case, the condition (ii) from Thm. 5.4 is satisfied with $d_{ij}=1$ and $q_i> 15/2$, which is equivalent  to having $\sum_{j=1}^{q_i}L_{ij}-(15/2) O_{\mathbb{P}^2}(1) $ big. With these data, the proof follows the same steps of the proof of Thm. 5.4 but replacing each $C(l_i,n,\delta)$ with 15/2 and using Lemm 3.3 instead of Lemma 3.2.
\end{proof}
\begin{Cor}
Let  $\phi$ be an endomorphism of $\mathbb{P}^2$ both def. over a number $k$.  Suppose there exist $ m\geq 0$ and a nontrivial effective divisor $D$  such that
\begin{itemize}
\item[\emph{(i)}] $(\phi^{(m)})^*(D) = L_{1}  + L_{2} + \cdots +L_{ q} + D'$ a sum of effective divisors, $L_{1}, \ldots, L_{q}$ are lines in general position, and
$q>15/2$ 
\end{itemize}
Then there exists $ \epsilon$ such that the set
\begin{equation*}
\big\{\phi^{(n)}(P):  n \in \mathbb{N}^{\geq m}, [k(P):k]\leq 2, \sum_{v\notin S}\sum_{\substack{w\mid v\\ w \in M_{k(P)}}} \lambda_w(D, \phi^{(n)}(P)) \le \epsilon h(\phi^{(n)}(P))\big\}
\end{equation*}is not Zariski dense.

\end{Cor}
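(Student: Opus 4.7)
The plan is to derive this corollary as the single-map specialization of Theorem~5.5, supplemented by a short direct argument to cover the boundary iterate $n=m$.

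I apply Theorem~5.5 with $\ell=1$, $\phi_1=\phi$, so that $\mathcal F=\langle\phi\rangle=\{\phi^{(k)}:k\geq 1\}$, and with a single distinguished element $\psi_1:=\phi^{(m)}$. The input required by hypothesis (i) of Theorem~5.5, namely
$$
\psi_1^{*}(D)=L_{1,1}+\cdots+L_{1,q_1}+D_1'
$$
with the $L_{1,j}$ being lines in general position and $q_1>15/2$, is exactly the hypothesis (i) of the corollary (with $q_1=q$). Theorem~5.5 then furnishes an $\epsilon_0>0$ and a proper Zariski-closed subset $W_1\subset\mathbb{P}^2$ that contains every point $\phi'(P)$ with $\phi'\in\psi_1\circ\mathcal F=\{\phi^{(n)}:n\geq m+1\}$, $[k(P):k]\leq 2$, and the local-height condition. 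This handles every iterate $\phi^{(n)}(P)$ with $n\geq m+1$.

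The remaining case $n=m$ (which is needed because $\mathcal F$ as a semigroup may not contain the identity) is dispatched directly. Using functoriality of local and global heights under pullback by $\phi^{(m)}$, together with the decomposition $(\phi^{(m)})^{*}(D)=L_1+\cdots+L_q+D'$ into effective summands, I translate the hypothesis
$$
\sum_{v\notin S}\sum_{\substack{w\mid v\\ w\in M_{k(P)}}}\lambda_w\bigl(D,\phi^{(m)}(P)\bigr)\ \leq\ \epsilon\, h\bigl(\phi^{(m)}(P)\bigr)
$$
into an inequality of the form
$$
\sum_{j=1}^{q}\sum_{v\notin S}\sum_{\substack{w\mid v\\ w\in M_{k(P)}}}\lambda_w(L_j,P)\ \leq\ \epsilon'\, h(P)+O(1),
$$
where $\epsilon'$ is a fixed positive multiple of $\epsilon$ depending only on the morphism $\phi^{(m)}$. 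Since $q>15/2$ and $[k(P):k]\leq 2$, Lemma~3.3 forces every such $P$ into a finite union of lines $Z\subset\mathbb{P}^2$, provided $\epsilon$ is chosen small enough. Then $\phi^{(m)}(Z)$ is a proper Zariski-closed subset of $\mathbb{P}^2$, and $W_1\cup\phi^{(m)}(Z)$ contains the entire set of the corollary.

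The only mildly delicate point, which is really bookkeeping rather than a genuine obstacle, is this $n=m$ boundary case; everything else is an immediate specialization of Theorem~5.5. One shrinks $\epsilon$ at the end so that both thresholds (from Theorem~5.5 and from the direct application of Lemma~3.3) are simultaneously respected, which completes the proof.
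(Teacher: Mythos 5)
Your proof is correct and follows the paper's intended route: the corollary is the $\ell=1$, $\psi_1=\phi^{(m)}$ specialization of Theorem~5.5, which the paper itself states without a separate proof. Your explicit treatment of the boundary iterate $n=m$ is a worthwhile bit of care: under the strict reading that the semigroup $\mathcal F=\langle\phi\rangle$ contains no identity, $\psi_1\circ\mathcal F$ covers only $n\geq m+1$, so the case $n=m$ genuinely must be handled separately, and the paper glosses over this. One small imprecision in that step: Lemma~3.3 (together with the $O(1)$ terms from functoriality and from $h(L_j,P)=h(P)+O(1)$) forces $P$ into a finite union of lines \emph{or} into a set of bounded height, not only the former as you phrase it; but since algebraic points of bounded degree and bounded height form a finite set, this only adds finitely many points to $Z$ before pushing forward by the finite morphism $\phi^{(m)}$, so your conclusion that the exceptional locus is a proper Zariski-closed subset stands.
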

\subsection{Generalization to Correspondences}
We state results about integral points in correspondences now which apply under more general conditions satisfied by some correspondence iterate pullbacks(CIP) (see definition 4.2), first over a number field.
\begin{Th}
Let $X$ be a smooth projective variety of dimension $n$ defined over a number field $k$,  $A$ an ample divisor of $X$, $S$ be a finite set of places of $k$, and $C \subset X \times X$ be a subvariety (correspondence) of $X$ defined over $k$. Let $\mathcal P$ be the scheme of paths associated to $C$, with associated maps $\pi : \mathcal{P} \rightarrow X$ and  $\sigma: \mathcal{P} \rightarrow \mathcal{P}$ as it is described in Section \ref{corresp}.  

Suppose that there exist $m \in \mathbb{N}^{>0}$ and a nontrivial effective divisor $D$ of $X$ defined over $k$ such that
\begin{itemize}
\item[\emph{(i)}] $D$ has an $m$-th iterate (CIP)   $D^{(m)} = D_1+...+D_q + D^{(\text{rest})}$ w.r.t. $C$ that is a sum of effective ample divisors where $D_1,...,D_q$ are in $l$-subgeneral position for some $l\geq n$.

\item[\emph{(ii)}] There exists $c_j \in \mathbb{Q}$ such that
$ \sum_{j=1}^{q} c_{j}{D_{j}} - (l-n+1)(n+1) A$ is big  and   $A-c_{j}D_{j}$ is $\mathbb{Q}$-nef for all $1 \leq  j \leq q$

\end{itemize}
Then for all sufficiently small $\epsilon$, the set
\begin{equation}\label{eq9}
\big\{\pi(\sigma^{(n)}(P))\in X(k), P \in \mathcal{P},  n \in \mathbb{N}^{\geq m}, \sum_{v\notin S} \lambda_v(D, \pi(\sigma^{(n)}(P))) \le \epsilon h(A, \pi(\sigma^{(n)}(P)))\big\}
\end{equation}
is not Zariski dense in $X$.
\end{Th}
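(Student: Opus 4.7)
The plan is to transcribe the argument of Theorem 5.1 into the correspondence setting, with the functoriality of heights under iterated endomorphism pullbacks replaced by a \emph{CIP-functoriality} derived from the relations $y^{*}L_{i-1}=x^{*}L_{i}$ defining the $m$-th CIP $D^{(m)}$, and with the correspondence structure used to transfer height estimates from $\pi(Q)$ to $\pi(\sigma^{(m)}(Q))$. Throughout I fix $Q:=\sigma^{(n-m)}(P)$ so that $\pi(\sigma^{(n)}(P))=\pi(\sigma^{(m)}(Q))$, and, assuming $P\in\mathcal{P}(k)$, also $\pi(Q)\in X(k)$.

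The first key step establishes the CIP-functoriality of local heights. Using the identities $\pi=x\circ\epsilon$ and $\pi\circ\sigma=y\circ\epsilon$ from Lemma 4.1, together with the functoriality properties recalled in Section~\ref{sec:heights}, evaluation of local heights at $\epsilon(\sigma^{(m-i)}(Q))\in C$ combined with $y^{*}L_{i-1}=x^{*}L_{i}$ yields the telescoping identity
\[
\lambda_{v}\!\left(L_{i-1},\pi(\sigma^{(m-i+1)}(Q))\right)=\lambda_{v}\!\left(L_{i},\pi(\sigma^{(m-i)}(Q))\right)+O(1),
\]
valid on a Zariski-dense subset. Chaining these $m$ identities and recalling $L_{0}=D$, $L_{m}=D^{(m)}$ gives $\lambda_{v}(D,\pi(\sigma^{(m)}(Q)))=\lambda_{v}(D^{(m)},\pi(Q))+O(1)$, and summing over $v$ yields the analogous statement for global heights. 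In parallel, since both $x^{*}A$ and $y^{*}A$ are ample on $C$ as finite pullbacks of the ample divisor $A$, their heights on $C(\bar k)$ are comparable up to a multiplicative constant; iterating across $m$ correspondence steps produces $h(A,\pi(\sigma^{(m)}(Q)))\leq c''h(A,\pi(Q))+O(1)$ for a fixed constant $c''>0$ depending only on $A$, $C$, and $m$.

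With these tools in hand, I mimic the scheme of Theorem 5.1. Starting from a point in \eqref{eq9}, the effectivity of $D^{(\mathrm{rest})}$ together with CIP-functoriality yields
\[
\sum_{v\notin S}\lambda_{v}(D,\pi(\sigma^{(n)}(P)))\;\geq\;\sum_{j=1}^{q}\sum_{v\notin S}\lambda_{v}(D_{j},\pi(Q))+O(1).
\]
Bigness of $\sum_{j}c_{j}D_{j}-(l-n+1)(n+1)A$ produces a lower bound $h(\sum_{j}c_{j}D_{j},\pi(Q))\geq c'h(A,\pi(Q))+(l-n+1)(n+1)h(A,\pi(Q))$ outside a proper Zariski-closed subset $Z'_{1}\subset X$, and combining this with the height decomposition and the upper bound on $h(A,\pi(\sigma^{(n)}(P)))$ leads to
\[
\sum_{v\in S}\sum_{j=1}^{q}c_{j}\lambda_{v}(D_{j},\pi(Q))\;\geq\;\bigl[\,c'+(l-n+1)(n+1)-\epsilon(\max_{j}c_{j})c''\,\bigr]h(A,\pi(Q))+O(1).
\]
Choosing $\epsilon<c'/((\max_{j}c_{j})c'')$ makes the bracketed coefficient strictly exceed $(l-n+1)(n+1)$, which contradicts Lemma 3.1 applied to $\{D_{j}\}$ with the rationals $\{c_{j}\}$, forcing $\pi(Q)$ into the proper Zariski-closed exceptional subset $Z'_{2}$ supplied by He--Ru. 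Setting $Z:=Z'_{1}\cup Z'_{2}$, one concludes $\pi(Q)\in Z$.

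Finally, to pass the conclusion from $\pi(Q)$ to $\pi(\sigma^{(n)}(P))$: from $\epsilon(Q)\in x^{-1}(Z)$ I deduce $\pi(\sigma(Q))=y(\epsilon(Q))\in y(x^{-1}(Z))$, and, since $x$ is finite (so $x^{-1}(Z)$ has codimension at least one in $C$) and $y$ is proper, $y(x^{-1}(Z))$ remains a proper Zariski-closed subset of $X$; iterating this observation $m$ times shows that $\pi(\sigma^{(m)}(Q))=\pi(\sigma^{(n)}(P))$ lies in the fixed proper Zariski-closed subset obtained by pushing $Z$ forward through $m$ steps of the correspondence, a set that depends only on the data of the theorem and not on $P$ or $n$. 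The main obstacle I foresee is the telescoping in the first step: the intermediate divisors $L_{i}$ need not be effective, and the relations $y^{*}L_{i-1}=x^{*}L_{i}$ hold on the possibly singular $C$, so consistent choices of Weil local functions for each $L_{i}$ and careful tracking of the $O(1)$ errors are required to keep the telescoping error uniformly bounded rather than growing with $m$. Once this is in place the remainder transcribes the proof of Theorem 5.1 faithfully.
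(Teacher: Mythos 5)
Your proof follows the same strategy as the paper's: telescope the CIP relations to obtain $\lambda_v(D,\pi(\sigma^{(m)}(Q)))\sim\lambda_v(D^{(m)},\pi(Q))$, transfer the ample-height comparison across $m$ correspondence steps to get $h(A,\pi(\sigma^{(m)}(Q)))\le c''h(A,\pi(Q))$, and then rerun the argument of Theorem 5.1 at the point $\pi(Q)$ with $Q=\sigma^{(n-m)}(P)$, concluding via Lemma 3.1 for $\epsilon$ small. If anything your write-up of the final step is more careful than the paper's, since you justify that pushing the exceptional set forward through the correspondence via $y(x^{-1}(\cdot))$ yields a proper Zariski-closed subset of $X$, while the paper simply places $\pi(\sigma^{(n)}(P))$ in $\pi(\sigma^{(m)}(\pi^{-1}(Z)))$ without that verification.
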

\begin{proof}
Suppose $\pi(\sigma^{(n)}(P))$ belongs to the set \eqref{eq9}. We can suppose $n\geq  m$. The hypothesis (i) means that there exist divisors $L_0,...,L_m$ of $X$, with $D=L_0$ and $L_m=D^{(m)}$ such that $y^*L_{i-1}=x^*L_i$ for each $i=1,...,m.$ Due to functoriality of heights and Lemma 4.1, this implies for every place $v$ of $k$ that 
\newline \newline
$\lambda_v(D, \pi(\sigma^{(n)}(Q)))=\lambda_v(D, y(\epsilon(Q))) \sim \lambda_v(y^*D, \epsilon(\sigma^{(n-1)}Q))=\lambda_v(x^*L_1, \epsilon(\sigma^{(n-1)}Q))\newline \newline \sim \lambda_v(L_1, x(\epsilon(\sigma^{(n-1)}Q)))   =\lambda_v(L_1, \pi(\sigma^{(n-1)}Q)).\newline \newline
$ Repeating the argument, we obtain
$$
\lambda_v(D, \pi(\sigma^{(n)}(Q)))\sim \lambda_v(L_1, \pi(\sigma^{(n-1)}(Q))) \sim \lambda_v(L_2, \pi(\sigma^{(n-2)}(Q)))\sim ...\sim  \lambda_v(L_m, \pi(\sigma^{(n-m)}(Q))).
$$ By functoriality and Lemma 4.1 the same holds if we replace $\lambda_v(D,.)$ with $h(D,.)$

By functoriality of heights again,
\begin{equation}\begin{split}
\sum_{v\notin S} &\lambda_v(D, \pi(\sigma^{(n)}(P))) \sim h(D, \pi(\sigma^{(n)}(P))) - \sum_{v\in S} \lambda_v(D,\pi(\sigma^{(n)}(P))) \\
&\sim h(L_m, \pi(\sigma^{(n-m)}(P)))  - \sum_{v\in S} \lambda_v(L_m, \pi(\sigma^{(n-m)}(P))) \\
&\sim \sum_{v\notin S} \lambda_v(D^{(m)}, \pi(\sigma^{(n-m)}(P))) \\
&\ge \sum_{j=1}^{q} \sum_{v\notin S} \lambda_v(D_{j}, \pi(\sigma^{(n-m)}(P))) \label{eq1} \end{split}
\end{equation}
 In addition, because of the ampleness of $x^*A$ and  Lemma 4.1, there exists a constant $c \ge 1$ not depending on $Q$ such that \newline \newline
$
h(A,\pi(\sigma^{(m)}(Q)))\sim h(y^*(A),\epsilon(\sigma^{(m-1)}(Q))) \leq c h(x^*(A),\epsilon(\sigma^{(m-1)}(Q)))=ch(A,\pi(\sigma^{(m-1)}(Q)))
$\newline \newline
for all $Q$ (functoriality). Doing this $m$ times yields the existence of a positive constant $C$ such that
\begin{equation}\label{eq4}
 h(A,\pi(\sigma^{(m)}(Q)))\le C h(A,\pi(Q)).
\end{equation}
Also, since $L=\sum_{j=1}^{q} c_j{D_{j}} - (l-n+1)(n+1) A$ is big, there exists a constant $c'>0$ such that
$$
h(A,Q) \leq c' h(L, Q)
$$ for all $Q \in X$ outside  a proper Zariski-closed set $Z'_1$.

Applying all of the above with $Q = \sigma^{(n-m)}(P)$  and the condition in \eqref{eq9},  we obtain
\begin{align*}
& h (-(l-n+1)(n+1) A, \pi(\sigma^{(n-m)}(P))) + \sum_{j=1}^{q} \sum_{v\in S} {\lambda_v(c_{j}D_{j},\pi (\sigma^{(n-m)}(P)))}\\
&\sim h\left(- (l-n+1)(n+1) A + \sum_{j=1}^{q} c_{j}{D_{j}}, \pi(\sigma^{(n-m)}(P))\right) - \sum_{j=1}^{q} \sum_{v\notin S} {\lambda_v(c_{j}D_{j}, \pi(\sigma^{(n-m)}(P)))}\\
&\ge h\left(- (l-n+1)(n+1) A + \sum_{j=1}^{q} c_{j}{D_{j}}, \pi(\sigma^{(n-m)}(P))\right) - \left(\max_{j} c_{j}\right)\sum_{j=1}^{q} \sum_{v\notin S} {\lambda_v(L_m, \pi(\sigma^{(n-m)}(P)))}\\
&\ge  c' h(A,\pi(\sigma^{(n-m)}(P))) -  \left(\max_{j} c_{j}\right) \sum_{v\notin S} \lambda_v(D, \pi(\sigma^{(n)}(P)))\\
&\ge  c' h(A,\pi(\sigma^{(n-m)}(P))) - \epsilon\cdot  \left(\max_{j} c_{j}\right) h(A,\pi(\sigma^{(n)}(P)))\\
&\ge c' h(A,\pi(\sigma^{(n-m)}(P))) - \epsilon \max_{j} Cc_{j}h(A,\pi(\sigma^{(n-m)}(P))).
\end{align*}

Therefore, whenever $\epsilon$ is small enough so that $c' - \epsilon C\max_j{c_j} >0$, Lemma 3.1 applied to the set of divisors $\{D_j\}_{j \leq q}$ implies that there exists a proper Zariski-closed set $Z_1$ such that either $\pi (\sigma^{(n-m)}(P)) \in Z_1$ or $\pi (\sigma^{(m)}(P)) \in Z_2$ a set of bounded height.  This argument shows that if $\pi (\sigma^{(n)}(P))$ is in the set \eqref{eq9}, then $\pi( \sigma^{(n)}(P))$ is in$$ \pi (\sigma^{(m)} (\pi^{-1}(Z'_1))) \cup \pi (\sigma^{(m)} (\pi^{-1}(Z_2))) \cup \pi (\sigma^{(m)} (\pi^{-1}(Z_1)))$$.
\end{proof} Similar to the deduction of Corollary 5.2, the hypothesis of the previous theorem are satisfied in the next statement, with $c_j=1/d_j$.
\begin{Cor}
Let $X$ be a smooth projective variety of dimension $n$ defined over a number field $k$,  $A$ an ample divisor of $X$, $S$ be a finite set of places of $k$, and $C \subset X \times X$ be a subvariety (correspondence) of $X$ defined over $k$, with $\sigma$ and $\pi$ as before.

Suppose that there exist $m \in \mathbb{N}^{>0}$ and a nontrivial effective divisor $D$ of $X$ defined over $k$ such that
\begin{itemize}
\item[\emph{(i)}]  $D$ has an $m$-th (CIP)  $D^{(m)} = D_1+...+D_q + D^{(\text{rest})}$ w.r.t. $C$ that is a sum of effective divisors where $D_1,...,D_q$ are in $l$-subgeneral position for some $l\geq n$.
\item[\emph{(ii)}] $ \sum_{j=1}^{q} \frac{D_{j}}{d_{j}} - (l-n+1)(n+1) A$ is big  and $D_{j}\equiv d_{j}A $   where $d_{j}$ are positive integers for all $j$.

\end{itemize}
Then for all sufficiently small $\epsilon$, the set
\begin{equation}
\big\{\pi(\sigma^{(n)}(P)) \in X(k), P \in \mathcal{P}, n \in \mathbb{N}^{\geq m}, \sum_{v\notin S} \lambda_v(D, \pi(\sigma^{(n)}(P))) \le \epsilon h(A, \pi(\sigma^{(n)}(P)))\big\}
\end{equation}
is not Zariski dense in $X$.
\end{Cor}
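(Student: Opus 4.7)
The plan is to deduce this corollary as an immediate application of Theorem 5.7 with the specific choice $c_j = 1/d_j \in \mathbb{Q}_{>0}$. The deduction is parallel to the way Corollary 5.2 was obtained from Theorem 5.1, so no new dynamical or Diophantine content enters; only a verification that the hypotheses of Theorem 5.7 hold under the simpler numerical assumptions of the corollary.

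First I would check hypothesis (i) of Theorem 5.7. The only apparent gap with the corollary's hypothesis (i) is that Theorem 5.7 asks the summands $D_j$ to be ample, while the corollary only states that they are effective. However, since $D_j \equiv d_j A$ with $d_j \in \mathbb{Z}_{>0}$ and $A$ ample, Kleiman's criterion (ampleness is a numerical property for $\mathbb{Q}$-Cartier divisors on a projective variety) shows each $D_j$ is ample, so the stronger ``effective ample'' hypothesis is automatic, and the $l$-subgeneral position requirement is carried over unchanged.

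Next I would verify hypothesis (ii) of Theorem 5.7 for $c_j = 1/d_j$. The bigness of $\sum_{j=1}^{q} c_j D_j - (l-n+1)(n+1) A$ is exactly the bigness assumed in the corollary. For the $\mathbb{Q}$-nefness of $A - c_j D_j$, the numerical equivalence $D_j \equiv d_j A$ gives
\[
A - \tfrac{1}{d_j} D_j \;\equiv\; A - \tfrac{1}{d_j}(d_j A) \;=\; 0,
\]
and a numerically trivial $\mathbb{Q}$-divisor is tautologically $\mathbb{Q}$-nef (its intersection with every irreducible curve vanishes). With all hypotheses of Theorem 5.7 verified, its conclusion---the non-Zariski-density of the stated integrality set---transfers directly to the corollary.

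There is essentially no obstacle: the whole argument is a translation of hypotheses, and the only technical point worth flagging is the use of Kleiman's criterion to upgrade ``effective plus numerically a positive multiple of an ample class'' to ``effective ample.'' Everything else reduces to rewriting the corollary's condition (ii) in the form required by the theorem.
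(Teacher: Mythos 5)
Your proposal is correct and follows essentially the same route as the paper, which likewise deduces the corollary from Theorem 5.7 by setting $c_j = 1/d_j$ (in parallel with the deduction of Corollary 5.2 from Theorem 5.1). You add a useful clarification the paper leaves implicit: since $D_j \equiv d_j A$ with $d_j > 0$ and $A$ ample, ampleness of $D_j$ follows by Kleiman's criterion, so the ``effective ample'' requirement in Theorem 5.7(i) is automatically met, and $A - (1/d_j)D_j \equiv 0$ is indeed $\mathbb{Q}$-nef.
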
 Now,  we also deduce a type of uniform result for points of correspondences that are quasiintegral and have bounded degree.
\begin{Th}
Let $X$ be a smooth projective variety of dimension $n$ defined over a number field $k$,  $A$ an ample divisor of $X$, $S$ be a finite set of places of $k$, and $C \subset X \times X$ be a subvariety (correspondence) of $X$ defined over $k$ , with $\sigma$ and $\pi$ as before.

Suppose that there exist $m \in \mathbb{N}^{>0}$ and a nontrivial effective divisor $D$ of $X$ defined over $k$ such that
\begin{itemize}
\item[\emph{(i)}] $D$ has an $m$-th iterate (CIP)  $D^{(m)} = D_1+...+D_q + D^{(\text{rest})}$ w.r.t. $C$ that is a sum of effective ample divisors where $D_1,...,D_q$ are in $l$-subgeneral position for some $l\geq n$.
\item[\emph{(ii)}]  There exist integers $d_i>0$ such that
$ \sum_{j=1}^{q} \frac{D_{j}}{d_{j}} - C(l,n,\delta) A$ is big  and $D_{j}\equiv d_{j}A $  for all $j$ 

\end{itemize}
Then for all sufficiently small $\epsilon$, the set
$$
\{\pi(\sigma^{(n)}(P)): P\in \mathcal{P}, [k(\pi(\sigma^{(n)}(P))):k]\leq\delta, n \geq m,  \sum_{\substack{v\notin S\\w\mid v\\ w \in M_{k(P)}}}  \lambda_w(D, \pi(\sigma^{(n)}(P))) \le \epsilon h(A, \pi(\sigma^{(n)}(P)))\}
$$
is not Zariski dense in $X$.
\end{Th}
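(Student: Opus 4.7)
The plan is to merge the correspondence-telescoping argument from the proof of Theorem 5.7 with the bounded-degree Schmidt-type input of Lemma 3.2, mirroring the way Theorem 5.4 upgrades Theorem 5.1. Suppose $\pi(\sigma^{(n)}(P))$ lies in the claimed set. Using condition (i), fix divisors $L_0 = D, L_1, \ldots, L_m = D^{(m)}$ with $y^*L_{i-1} = x^*L_i$. Then for every $v \notin S$ and every $w \in M_{k(P)}$ with $w \mid v$, functoriality of local heights (Section \ref{sec:heights}) combined with Lemma 4.1 yields, step by step along the path,
$$
\lambda_w(D, \pi(\sigma^{(n)}(P))) \sim \lambda_w(L_1, \pi(\sigma^{(n-1)}(P))) \sim \cdots \sim \lambda_w(D^{(m)}, \pi(\sigma^{(n-m)}(P))),
$$
and the analogous identity for global heights. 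Effectivity of $D^{(\text{rest})}$ then gives the lower bound $\lambda_w(D^{(m)}, \cdot) \geq \sum_{j=1}^q \lambda_w(D_j, \cdot) + O(1)$.

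Next, bigness of $L := \sum_j D_j/d_j - C(l,n,\delta) A$ from condition (ii) supplies a proper Zariski-closed set $Z' \subset X$ and $c' > 0$ such that $h(L, Q) \geq c' h(A, Q)$ outside $Z'$. Iterating Lemma 4.1 along the CIP chain also produces a constant $\mathcal{C} \geq 1$ with $h(A, \pi(\sigma^{(m)}(Q))) \leq \mathcal{C} h(A, \pi(Q)) + O(1)$. Rearranging as in the proofs of Theorems 5.4 and 5.7, the integrality hypothesis translates into
$$
\sum_{v \in S}\sum_{\substack{w\mid v\\ w\in M_{k(P)}}}\sum_{j=1}^q \frac{\lambda_w(D_j, \pi(\sigma^{(n-m)}(P)))}{d_j} \geq \Bigl(c' - \epsilon\, \mathcal{C}\max_j \tfrac{1}{d_j}\Bigr) h(A, \pi(\sigma^{(n-m)}(P))) - O(1),
$$
valid whenever $\pi(\sigma^{(n-m)}(P)) \notin Z'$.

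The third step is to apply Lemma 3.2 to $D_1, \ldots, D_q$ (each numerically equivalent to $d_j A$, in $l$-subgeneral position) at the algebraic point $\pi(\sigma^{(n-m)}(P))$. Choosing $\epsilon$ small enough that $c' - \epsilon \mathcal{C}\max_j 1/d_j > C(l,n,\delta)$, Lemma 3.2 forces $\pi(\sigma^{(n-m)}(P))$ into a proper Zariski-closed exceptional set $Z_1$ or into a height-bounded set $Z_2$. Pushing forward by $\pi \circ \sigma^{(m)} \circ \pi^{-1}$ shows that the set in the theorem is contained in $\pi(\sigma^{(m)}(\pi^{-1}(Z' \cup Z_1 \cup Z_2)))$, which is not Zariski dense in $X$.

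The main obstacle is verifying that Lemma 3.2 actually applies with the constant $C(l,n,\delta)$ of the statement: the degree bound $[k(\pi(\sigma^{(n)}(P))):k] \leq \delta$ on the forward endpoint needs to propagate back $m$ steps to a degree bound on $\pi(\sigma^{(n-m)}(P))$. Since the projections $x, y : C \to X$ are finite of bounded degree, tracing backward along the path inflates the field of definition by at most a factor $(\deg y)^m$, so $[k(\pi(\sigma^{(n-m)}(P))):k]$ is bounded by a constant depending only on $C, m, \delta$. With this uniform bound (absorbed into the constant appearing in the bigness hypothesis, since $m$ and $C$ are fixed by the theorem data), Lemma 3.2 applies and the argument closes. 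A related technical subtlety to handle carefully is that the $\sim$-telescoping in the first step must be carried out at each extended place $w \mid v$ independently, but this follows from the functoriality formulas recalled in Section \ref{sec:heights} applied coordinate-wise on the path space.
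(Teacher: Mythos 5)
Your overall approach mirrors the paper's proof: telescope local heights along the CIP chain using $y^*L_{i-1}=x^*L_i$ and Lemma 4.1, split into divisor pieces using effectivity, bound the height ratio between $\pi(\sigma^{(m)}(Q))$ and $\pi(Q)$, invoke bigness of $L = \sum_j D_j/d_j - C(l,n,\delta)A$, and finish with Lemma 3.2. However, there are two points where your write-up diverges in a way that matters.

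First, your displayed inequality
$\sum_{v\in S}\sum_{w\mid v}\sum_{j}\lambda_w(D_j,Q)/d_j \ge (c'-\epsilon\,\mathcal{C}\max_j 1/d_j)\,h(A,Q)-O(1)$
drops the term $C(l,n,\delta)\,h(A,Q)$ coming from $h(\sum_j D_j/d_j,Q)=h(L,Q)+C(l,n,\delta)\,h(A,Q)+O(1)$. The correct lower bound is $(C(l,n,\delta)+c'-\epsilon\,\mathcal{C}\max_j 1/d_j)\,h(A,Q)-O(1)$. As a consequence, your closing requirement $c'-\epsilon\,\mathcal{C}\max_j 1/d_j > C(l,n,\delta)$ demands $c' > C(l,n,\delta)$, which is not guaranteed: $c'$ is only some positive constant produced by bigness of $L$ and can be arbitrarily small. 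With the missing term reinstated, the comparison with Lemma 3.2 reduces to $c'-\epsilon\,\mathcal{C}\max_j 1/d_j>\epsilon'$, so the correct condition is simply $c'-\epsilon\,\mathcal{C}\max_j 1/d_j>0$, i.e.\ any sufficiently small $\epsilon>0$ works. This is what the paper uses.

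Second, your observation about degree propagation is sharp and flags a real subtlety: Lemma 3.2 is applied at $Q=\pi(\sigma^{(n-m)}(P))$ and needs a degree bound there, whereas the stated hypothesis bounds $[k(\pi(\sigma^{(n)}(P))):k]$, which lies $m$ steps \emph{forward} along the path. Your proposed repair — bounding $[k(\pi(\sigma^{(n-m)}(P))):k]$ by roughly $(\deg y)^m\delta$ and ``absorbing'' this into the constant — does not actually prove the stated theorem: it would require replacing $C(l,n,\delta)$ in hypothesis (ii) by the larger $C(l,n,(\deg y)^m\delta)$, which strengthens the hypothesis and hence proves a weaker result. The paper's own proof applies Lemma 3.2 at $\pi(\sigma^{(n-m)}(P))$ without addressing the degree at all; the intent (consistent with the local-height sum being over $w\in M_{k(P)}$ with $P\in\mathcal{P}$, and with the analogous orbit result Theorem \ref{boundeddegree} where the bound is on the initial point) is most plausibly that the degree bound should be placed on $P$, or at least on $\pi(\sigma^{(n-m)}(P))$, in which case forward functoriality of $\pi\circ\sigma^{(n-m)}$ gives what is needed and $C(l,n,\delta)$ can be used as written. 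So your concern is legitimate, but the fix you offer changes the constant rather than recovering the theorem as stated; a cleaner resolution is to impose the degree bound on the path point $P$ (or on $\pi(\sigma^{(n-m)}(P))$) directly.
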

\begin{proof} We adapt the proof of Theorem 5.7 to encompass points of bounded degree.
Suppose $\pi(\sigma^n(P))$ belongs to the set above with $n\geq m$.  
The hypothesis (i) means that there exist divisors $L_0,...,L_m$ of $X$, with $D=L_0$ and $L_m=D^{(m)}$ such that $y^*L_{i-1}=x^*L_i$ for each $i=1,...,m.$ Due to functoriality of heights and Lemma 4.1, we have as is the proof of Thm. 5.7 that
$$
\lambda_w(D, \pi(\sigma^{(n)}(Q)))\sim  \lambda_w(L_m, \pi(\sigma^{(n-m)}(Q)))
$$ for every place of a field extension of $k$ where the points are defined. 
 The same holds if we replace $\lambda_v(D,.)$ with $h(D,.)$

This, similarly as before, will imply that
\begin{equation}\begin{split}
\sum_{v\notin S}\sum_{\substack{w\mid v\\ w \in M_{k(P)}}} \lambda_w(D, \pi(\sigma^{(n)}(P))) &\sim \sum_{v\notin S}\sum_{\substack{w\mid v\\ w \in M_{k(P)}}}  \lambda_w(D^{(m)}, \pi(\sigma^{(n-m)}(P))) \\
&\ge \sum_{j=1}^{q} \sum_{v\notin S}\sum_{\substack{w\mid v\\ w \in M_{k(P)}}}  \lambda_w(D_{j}, \pi(\sigma^{(n-m)}(P))) \label{eq1} \end{split}
\end{equation}
 In addition, because of the ampleness of $x^*A$ and  Lemma 4.1, there exists a positive constant $C$ such that
\begin{equation}\label{eq4}
 h(A,\pi(\sigma^{(m)}(Q)))\le C h(A,\pi(Q)).
\end{equation}
Also, since $L=\sum_{j=1}^{q} \dfrac{D_{j}}{d_j} - C(l,n,\delta) A$ is big, there exists a constant $c'>0$ such that
$$
h(A,Q) \leq c' h(L, Q)
$$ for all $Q \in X$ outside  a proper Zariski-closed set $Z'_1$.

Applying all of the above with $Q = \sigma^{(n-m)}(P)$  and the condition in \eqref{eq9},  and making $c_j=1/d_j$ we obtain we obtain
\begin{align*}
& h (-C(l,n,\delta)A, \pi(\sigma^{(n-m)}(P))) + \sum_{j=1}^{q} \sum_{v\in S}\sum_{\substack{w\mid v\\ w \in M_{k(P)}}}  {\lambda_w(c_{j}D_{j},\pi (\sigma^{(n-m)}(P)))}\\
&\sim h\left(- C(l,n,\delta)A + \sum_{j=1}^{q} c_{j}{D_{j}}, \pi(\sigma^{(n-m)}(P))\right) - \sum_{j=1}^{q} \sum_{v\notin S}\sum_{\substack{w\mid v\\ w \in M_{k(P)}}} {\lambda_w(c_{j}D_{j}, \pi(\sigma^{(n-m)}(P)))}\\
&\ge h\left(- C(l,n,\delta) A + \sum_{j=1}^{q} c_{j}{D_{j}}, \pi(\sigma^{(n-m)}(P))\right) - \left(\max_{j} c_{j}\right)\sum_{j=1}^{q} \sum_{v\notin S}\sum_{\substack{w\mid v\\ w \in M_{k(P)}}} {\lambda_w(L_m, \pi(\sigma^{(n-m)}(P)))}\\
&\ge  c' h(A,\pi(\sigma^{(n-m)}(P))) -  \left(\max_{j} c_{j}\right) \sum_{v\notin S}\sum_{\substack{w\mid v\\ w \in M_{k(P)}}}  \lambda_w(D, \pi(\sigma^{(n)}(P)))\\
&\ge  c' h(A,\pi(\sigma^{(n-m)}(P))) - \epsilon\cdot  \left(\max_{j} c_{j}\right) h(A,\pi(\sigma^{(n)}(P)))\\
&\ge c' h(A,\pi(\sigma^{(n-m)}(P))) - \epsilon \max_{j} Cc_{j}h(A,\pi(\sigma^{(n-m)}(P))).
\end{align*}

Therefore, whenever $\epsilon$ is small enough so that $c' - \epsilon C\max_j{c_j} >0$, Lemma 3.2 applied to the set of divisors $\{D_j\}_{j \leq q}$ implies that there exists a proper Zariski-closed set $Z_1$ such that either $\pi (\sigma^{(n-m)}(P)) \in Z_1$ or $\pi (\sigma^{(m)}(P)) \in Z_2$ a set of bounded height.  This argument shows that if $\pi (\sigma^{(n)}(P))$ is in the set \eqref{eq9}, then $\pi( \sigma^{(n)}(P))$ is in$$ \pi (\sigma^{(m)} (\pi^{-1}(Z'_1))) \cup \pi (\sigma^{(m)} (\pi^{-1}(Z_2))) \cup \pi (\sigma^{(m)} (\pi^{-1}(Z_1)))$$.

\end{proof}

\textit{Proof of Theorem 1.4:} We can prove this similarly as in the proof of the previous theorem, adapting accordingly. In fact, we consider $X=\mathbb{P}^N$, $D_{j}=L_{j}$ to be lines in general position, and $A\sim O_{\mathbb{P}^N}(1)$ a hyperplane divisor representing a generator of the Picard group of $\mathbb{P}^N$, whose associated Weil height is the usual naive logarithmic height $h$ of $\mathbb{P}^N$ in the statement of the Theorem 5.9. In this case, the conditions (iii) and (iv) from Thm. 5.9 are satisfied with $d_{j}=1$ and $q> C(l,n,\delta) $.With these data, the proof follows the same steps of the proof of Thm. 5.9.

Note again that if $N=2$, we can repeat the previous proof with the sharper bound 15/2 in place of $C(l,n,\delta)$, using Lemma 3.3 instead of Lemma 3.2. 

Finally, if we apply both Corollary 5.8 and Theorem 5.9 with $X=\mathbb{P}^1$ we obtain the following

\begin{Cor} 
Let  $C \subset \mathbb{P}^1 \times  \mathbb{P}^1$ be a correspondence of $ \mathbb{P}^1$ defined over $k$. Let $\mathcal P$, $\pi$  and  $\sigma$ as before.  Let $D$ be an effective divisor of $ \mathbb{P}^1$ and suppose that $D$ has an $m$-th iterate (CIP) w.r.t. $C$   $D^{(m)} = D_1+...+D_q + D^{(\text{rest})}$ that is a sum of effective divisors where $D_1,...,D_q$ are in general position and $q > C(1,1,\delta)$ .
  
Then for all sufficiently small $\epsilon$, the set
$$
\{\pi(\sigma^{(n)}(P)): P\in \mathcal{P}, [k(\pi(\sigma^{(n)}(P))):k]\leq\delta, n \in \mathbb{N}^{\geq m}, \sum_{\substack{v\notin S \\w\mid v\\ w \in M_{k(P)}}}  \lambda_w(D, \pi(\sigma^{(n)}(P))) \le \epsilon h(\pi(\sigma^n(P)))\}
$$
is finite. In particular, the subset of set above formed by the points defined over $k$ $ (\delta=1)$ is finite when when $q>2$.
\end{Cor}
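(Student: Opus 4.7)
The plan is to deduce Corollary 5.10 directly from Theorem 5.9, and from Corollary 5.8 for the sharper final clause, by specializing all of the general-variety hypotheses to the curve $\mathbb{P}^1$. The Zariski non-density conclusion of those results will then automatically upgrade to honest finiteness, because the only proper Zariski-closed subsets of $\mathbb{P}^1$ are finite.

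First I would take $X = \mathbb{P}^1$, so $n = 1$, and set $A = \mathcal{O}_{\mathbb{P}^1}(1)$, whose associated Weil height is the usual logarithmic height $h$ appearing in the statement. The $m$-th CIP $D^{(m)} = D_1 + \cdots + D_q + D^{(\text{rest})}$ is a sum of effective divisors on $\mathbb{P}^1$; by refining, if necessary, a $D_j$ of higher degree into a sum of its constituent point divisors, I may assume that each $D_j$ is a single point. Such a divisor is automatically ample, has degree $d_j = 1$, and satisfies $D_j \equiv d_j A$ because $\mathrm{Pic}(\mathbb{P}^1) \simeq \mathbb{Z}$ is generated by $A$. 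Moreover, $l$-subgeneral position with $l = n = 1$ reduces to ordinary general position, which on $\mathbb{P}^1$ just means that the supports of the $D_j$ are pairwise distinct. This supplies hypothesis (i) of Theorem 5.9.

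Next I would verify the bigness condition (ii). Since a divisor on the curve $\mathbb{P}^1$ is big precisely when its degree is positive, the divisor
\[
\sum_{j=1}^q \frac{D_j}{d_j} - C(1,1,\delta)\,A
\]
has degree $q - C(1,1,\delta)$ and is therefore big if and only if $q > C(1,1,\delta)$, which is exactly the hypothesis of the corollary. Invoking Theorem 5.9 then produces, for all sufficiently small $\epsilon > 0$, a proper Zariski-closed subset of $\mathbb{P}^1$ containing the displayed integrality set. Such a subset is finite, so the set itself is finite.

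For the final clause ($\delta = 1$, $q > 2$), all points are defined over $k$, so I would apply Corollary 5.8 instead of Theorem 5.9. Its constant is the He--Ru bound $(l-n+1)(n+1)$, which for $l = n = 1$ equals $2$; hence $q > 2$ is precisely what the corollary requires, and the same degree-based bigness check together with the $\mathbb{P}^1$-finiteness observation completes the proof. The only delicate point throughout is translating the abstract bigness and ampleness hypotheses into the elementary degree-positivity condition on a curve, and this translation is entirely routine; I do not foresee any genuine obstacle.
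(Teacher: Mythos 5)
Your proof is correct and takes essentially the same route as the paper, which simply notes that the corollary follows by applying Corollary 5.8 and Theorem 5.9 with $X=\mathbb{P}^1$; you have merely spelled out the routine verifications (taking $d_j=\deg D_j$, translating bigness and ampleness on a curve to positivity of degree, and noting that proper Zariski-closed subsets of $\mathbb{P}^1$ are finite). One small remark: the optional refinement of each $D_j$ into point divisors is unnecessary, since on $\mathbb{P}^1$ one already has $D_j\equiv d_j A$ with $d_j=\deg D_j$ and $\sum_j D_j/d_j - C(1,1,\delta)A$ has degree $q-C(1,1,\delta)$ without any refinement; moreover, if some $D_j$ were a point with multiplicity, splitting it would produce coincident points not in general position, so it is cleaner to work with the given decomposition directly.
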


\end{document}